\newtheorem{theorem}{Theorem}[section]
\newtheorem{lm}[theorem]{Lemma}
\newtheorem{exa}[theorem]{Example}
\newtheorem{exas}[theorem]{Examples}
\newtheorem{cor}[theorem]{Corollary}
\newtheorem{pro}[theorem]{Proposition}
\newtheorem{defi}[theorem]{Definition}
\newtheorem{notas}[theorem]{Notations}
\newtheorem{rem}[theorem]{Remark}
\newtheorem{rems}[theorem]{Remarks}
\newtheorem{fact}[theorem]{Fact}
\newtheorem{facts}[theorem]{Facts}
\newtheorem{constr}[theorem]{Construction}
\newtheorem{confession}[theorem]{Confession}
\newcommand{\df}{\ensuremath{\overset{\mathrm{df}}{=}}}
\def\p{\varphi}
\def\a{\alpha}
\def\b{\beta}
\def\ep{\varepsilon}
\def\g{\gamma}
\def\s{\sigma}
\def\ups{\upsilon}
\def\up{\upsilon}
\def\UP{\Upsilon}
\def\epb{\bar{\varepsilon}_A^{Z_A}}
\def\lra{\longrightarrow}
\def\sbe{\subseteq}
\def\stm{\setminus}
\def\ems{\emptyset}
\def\nes{\neq\emptyset}
\def\cuk{\,\check{}\,}
\def\ovl{\overline}
\def\unl{\underline}
\def\ex{\exists}
\def\fa{\forall}
\def\we{\wedge}
\def\ap{^{\,\prime}}
\def\inv{^{-1}}
\def\st{\ |\ }
\def\nin{\not\in}
\def\A{\mbox{{\boldmath $A$}}}
\def\T{\mbox{{\boldmath $T$}}}
\def\cbf{{\bf c}}
\def\1{{\bf 1}}
\def\2{\mbox{{\sf 2}}}
\def\3{\mbox{{\bf 3}}}
\def\AA{{\cal A}}
\def\BB{{\cal B}}
\def\CC{{\cal C}}
\def\CCC{{\cal C}}
\def\DD{{\cal D}}
\def\EE{{\cal E}}
\def\PP{{\cal P}}
\def\TT{{\cal T}}
\def\UU{{\cal U}}
\def\XX{{\cal X}}
\def\YY{{\cal Y}}
\def\BBB{{\sf B}}
\def\CCC{{\sf C}}
\def\KKK{{\sf K}}
\def\PPP{{\sf P}}
\def\UUU{{\sf U}}
\def\ult{{\mathfrak u}}
\def\ultv{{\mathfrak v}}
\def\ultw{{\mathfrak w}}
\def\clu{{\mathfrak c}}
\def\CO{{\rm CO}}
\def\KO{{\rm KO}}
\def\RC{{\rm RC}}
\def\CR{{\rm CR}}
\def\BClust{{\rm BClust}}
\def\Clust{{\rm Clust}}
\def\BUlt{{\rm BUlt}}
\def\co{{\sf CO}}
\def\Id{{\sf Id}}
\def\rc{{\sf RC}}
\def\rcl{{\sf RCL}}
\def\clust{{\sf Clust}}
\def\bclust{{\sf BClust}}
\def\smf{{\,\smallfrown\,}}
\def\nsmf{{\,\not\smallfrown\,}}
\def\HLC{{\bf LKHaus}}
\def\DHLC{{\bf CLCA}}
\def\CLCA{{\bf CLCA}}
\def\DBoo{{\bf DBoo}}
\def\int{\mbox{{\rm int}}}
\def\cl{\mbox{{\rm cl}}}
\def\Clust{\mbox{{\rm Clust}}}
\def\BClu{\mbox{{\rm BClust}}}
\def\Ult{\mbox{{\rm Ult}}}
\def\Ultsf{\mbox{{\sf Ult}}}
\def\COsf{\mbox{{\sf CO}}}
\def\doc{\hspace{-1cm}{\em Proof.}~~}
\def\sq{\hspace*{\fill} \hbox{\vrule\vbox{\hrule\phantom{o}\hrule}\vrule}}
\def\sqs{\sq \vspace{2mm}}
\def\Top{{\bf Top}}
\def\BBBB{\mathbb{B}}
\def\Set{{\bf Set}}
\def\tcx{t_X^C}
\def\tcy{t_Y^C}
\def\tcx0{t_{(X,X_0)}}
\def\tcy0{t_{(Y,Y_0)}}
\def\di{\diamond}
\def\bU0{\bar{U}=(U^0,(U^i,U^{ci})_{i\in\omega})}
\def\bV0{\bar{V}=(V^0,(V^i,V^{ci})_{i\in\omega})}
\def\CAPX{{\sf C}(\AA,\PP,\XX)}
\def\ZCB{{\bf zCBoo}}
\def\LZCB{{\bf lzCBoo}}
\def\EDT{{\bf EdTych}}
\def\EDL{{\bf EdLKH}}
\title{{\large\bf A Categorical Review of Complete Regularity }\\
\vspace{0.2cm}
\vspace{0.5cm}
{\large Amir Homayoun Nejah and Walter Tholen}\thanks{
The second author acknowledges the support received under the Discovery Grants Program (no. 501260) of the Natural Sciences and Engineering Council of Canada. 
This grant and the assistance of the Fields Institute for the Mathematical Sciences supported the stay of the first author at York University in 2021-22, during which this work was completed.}
\\
\vspace{0.2cm}
{\footnotesize\rm Department of Mathematics and Statistics, York University,}
 {\footnotesize\rm Toronto, Ontario, M3J 1P3, Canada}
\\
\vspace{0.2cm}
}
\date{}
\begin{document}

\maketitle

\begin{abstract}
We use the ultrafilter-convergence axiomatics for topological spaces to motivate in detail a gentle categorical introduction, first to Barr's $\Set$-based relational $T$-algebras, and then to Burroni's $T$-preorders internal to a category $\CC$, here called $T$-spaces in $\CC$, for a monad $T$ on $\CC$ that substitutes the ultrafilter monad on $\Set$. Within these settings one finds not only the notions of compactness and Hausdorff separation, originally due to Manes, but also that of complete regularity. Based on a somewhat hidden result by Burroni, the main theorem of this paper establishes an external fibrational characterization of  the category of completely regular $T$-spaces with its reflexive subcategory of compact Hausdorff $T$-spaces, under modest assumptions on $\CC$ and $T$. 
\end{abstract}

\footnotetext[1]{{\footnotesize
{\em Keywords:} ultrafilter convergence; completely regular space; $T$-space; monotone map; topological category; $\mathrm{\check{C}}$ech-Stone compactification; reflective subcategory; Grothendieck fibration. 
}}

\footnotetext[2]{{\footnotesize
{\em 2010 Mathematics Subject Classification:} 54D15, 54D30, 54B30,  18A40, 18C20, 18D30, 18F60.   }}

\footnotetext[3]{{\footnotesize {\em E-mail:}
a.homayoun.nejah@gmail.com; tholen@yorku.ca} .}

\section{\large Introduction}\label{intro}
The axiomatization of convergence was a key motivator for the development of the notions of metric and topology, as pursued primarily by Frech\'{e}t and Hausdorff in the early twentieth century. The 1967 thesis by Manes \cite{Manes1967} (see also \cite{Manes1969}) characterized compact Hausdorff spaces as the Eilenberg-Moore algebras with respect to the ultrafilter monad $\UU$ on the category $\Set$ of sets. In this setting, the algebraic structure of a compact Hausdorff space $X$ is given by a single (generalized and infinitary) operation $\UU X\to X$ on its underlying set which assigns to every ultrafilter on $X$ its limit, subject to just two equational axioms that are reminiscent of the unity and associativity axioms for monoids.

Replacing the mapping $\UU X\to X$ by a mere relation between ultrafilters and points, Barr \cite{Barr1970} showed that, with the appropriate relational relaxation of the two equational axioms, one obtains an axiomatization of all topological spaces, with the ultrafilter convergence axioms now generalizing the reflexivity and transitivity conditions of preorders. He thereby reinforced a sentiment that is already apparent in Hausdorff's book \cite{Hausdorff1914}, namely that topological spaces are generalized preordered sets which, in today's parlance, appear as Alexandroff spaces.

Burroni \cite{Burroni1971} took Barr's relational $T$-algebras for a monad $T$ from the category $\Set$ to an arbitrary base category $\CC$ with pullbacks, without any a-priori restrictions on the monad $T$. His $T$-{\em categories} in $\CC$  cover simultaneously categories internal to $\CC$ (when $T$ is the identity monad) and $T$-preorders internal to $\CC$, which give an equivalent description of Barr's relational $T$-algebras in the case $\CC=\Set$. We call these $T$-preorders $T$-{\em spaces} in this paper, in order to emphasize their topological relevance, which reaches far beyond the order-theoretic perspective.

Without any reference to neighbourhood systems, or to open or closed sets, in the largely expository Section 2 we strictly follow the Manes-Barr ultrafilter-convergence axioms to give the notions of compactness, Hausdorff separation and complete regularity, and establish the reflectivity of Hausdorff compactness and complete regularity in this setting. The point of this presentation becomes clear in Section 3 since, in the preceding section, we actually never use any details regarding the ultrafilter monad, other than the general monad conditions. This means that, without any loss in mathematical content, $\UU$ may be replaced by any $\Set$-monad $T$ so that, in effect, topological spaces may be traded for Barr's relational $T$-algebras. Other than the case $T=\UU$, we consider in some detail the case that $T$ is the power-set monad, in order to augment (and, in one instance, rectify) its treatment in \cite{Manes1967}, \cite{Kamnitzer1974}, \cite{Perry1976}.

Minimizing Burroni's categorical apparatus, we describe in a topology-oriented notation the decisive step of replacing $\Set$ by  an abstract category $\CC$ in Section 4. In order to present the category $T$-{\bf Spa}$(\CC)$ of $T$-spaces in $\CC$ as a {\em topological} (\cite{AHS}, \cite{MonTop}) and, hence, ``$\Top$-like'' category over $\CC$, we assume $\CC$ to be complete and well-powered and to have a (regular epi, mono)-factorization system, and impose the preservation of regular epimorphisms by $T$ as the only condition beyond the monad axioms. (This condition comes for free in the case $\CC=\Set$, Choice granted.) Requiring only modest knowledge of basic category theory, we establish the Tychonoff Theorem and the existence of the $\mathrm{\check{C}}$ech-Stone reflection in this setting. Its presence is helpful (but not indispensable) for the introduction of the notion of complete regularity at this level.

Without reference to the case $\CC=\Set$ and $T=\UU$, and under a notation which suggests that he regarded just the identity monad as his role model, Burroni \cite{Burroni1971} characterized the category of completely regular $T$-spaces as a universal (Grothendieck-) fibrational extension of the monadic category of compact Hausdorff $T$-spaces over $\CC$. Our Theorem \ref{maintheorem} gives a detailed analysis, extension and  proof of this result, highlighting in particular its 2-categorical nature. 

We end this paper with some comparative remarks about some related works.

\section{\large A primer on axiomatic ultrafilter convergence}

For a set $X$, we denote by $\UU X$ the set of ultrafilters on $X$. For a map $f:X\to Y$, the map $\UU f:\UU X\to \UU Y$ sends $\mathfrak x\in \UU X$ to its image under $f$, defined by
$$ B\in \UU f(\mathfrak x)\iff f\inv(B)\in\mathfrak x,$$
for all $B\subseteq Y$. The ensuing endofunctor $\UU : \Set\to\Set$ preserves coproducts (disjoint unions) and, by B\"{o}rger's theorem \cite{Borger1987}, $\UU$ is characterized by the remarkable  property of being terminal with respect to the preservation of coproducts: for any coproduct-preserving endofunctor $F$ of $\Set$, there is exactly one natural transformation $F\to \UU $. The $X$-component of the unique transformations $\mathrm{Id}_{\Set}\to \UU $ and $\UU \UU\to \UU$ assigns to $x\in X$ and $\mathfrak X\in \UU \UU X$ respectively the fixed ultrafilter $\dot{x}$ and the filtered sum $\Sigma \mathfrak X$, defined by
$$A\in\dot{x}\iff x\in A\qquad\text{and}\qquad A\in\Sigma \mathfrak X\iff\{\mathfrak x\in \UU X\mid A\in\mathfrak x\}\in\mathfrak X,$$
for all $A\subseteq X$. For later use, we note that, by B\"{o}rger's theorem, the diagrams
\begin{center}
$\xymatrix{\UU\ar[rr]^{\UU\dot{(-)}}\ar[d]_{\dot{(-)}\UU}\ar[rrd]^{1_{\UU}} && \UU \UU\ar[d]^{\Sigma} &&& \UU\UU\UU\ar[rr]^{\UU\Sigma}\ar[d]_{\Sigma \UU} && \UU\UU\ar[d]^{\Sigma}\\
\UU\UU\ar[rr]_{\Sigma} && \UU &&& \UU\UU\ar[rr]_{\Sigma} && \UU\\
}$	
\end{center}
commute and, therefore, provide $\UU$ uniquely with the structure of a {\em monad} (we recall the definition of monad more generally in Section 3).

\begin{defi}\label{definitionUspace}
\rm 
	(1) A set $X$ together with a relation $C\subseteq \UU X\times X$ is called a $\UU${\em -graph}; thinking of $C$ as a convergence relation, we often write $\mathfrak x\rightsquigarrow y $ for $(\mathfrak x,y)\in C$. The $\UU$-graph $(X,\rightsquigarrow)$ is
	\begin{itemize}
	\item[(R)] {\em reflexive} if $\dot{x}\rightsquigarrow x$, for all $x\in X;$	
	\item[(T)] {\em transitive} if $(\mathfrak X\,\hat{\rightsquigarrow}\, \mathfrak y\;\text{and}\; \mathfrak y\rightsquigarrow z\Longrightarrow \Sigma \mathfrak X\rightsquigarrow z)$, for all $\mathfrak X\in \UU\UU X,\mathfrak y\in \UU X, z\in X$;  
	\end{itemize}
	here we have extended the relation $C$ to a relation $\hat{C}\subseteq \UU\UU X\times \UU X$ and written $\mathfrak X\,\hat{\rightsquigarrow}\,\mathfrak y$ instead of $(\mathfrak X,\mathfrak y)\in \hat{C}$, as an abbreviation of the statement
	$$\exists\; \mathfrak w\in \UU C: \UU\pi_1(\mathfrak w)=\mathfrak X\;\;\text{and}\;\; \UU\pi_2(\mathfrak w)=\mathfrak y,$$
	where $\pi_1:C\to \UU X,\;\pi_2:C\to X$ denote the restricted product projections. If there is no danger of ambiguity, for simplicity we may write $\mathfrak X\rightsquigarrow y$, rather than $\mathfrak X\,\hat{\rightsquigarrow}\,\mathfrak y$.
	
	(2) A map $f:(X,\rightsquigarrow)\to(Y,\rightsquigarrow)$ of $\UU$-graphs is {\em monotone} if it preserves the convergence relation:
	$$ \mathfrak x\rightsquigarrow y\Longrightarrow \UU f(\mathfrak x)\rightsquigarrow fy,$$
	for all $\mathfrak x\in \UU X, y\in X$. Taking as morphisms the monotone maps, one obtains the category
	\begin{itemize}
	\item $\UU$-{\bf Gph} of all $\UU$-graphs, as well as its full subcategories	
	\item $\UU$-{\bf RGph} of the reflexive $\UU$-graphs, and
	\item $\UU$-{\bf Spa} of the reflexive and transitive $\UU$-graphs which, more briefly, we refer to as
	$\UU$-{\em spaces}. 
	\end{itemize}
\end{defi}
We state some basic properties of these categories and their underlying-set functors.
\begin{facts}\label{USpatopological}
\rm 
	(1) Every set $X$ that comes with a family of maps $f_i:X\to Y_i \,(i\in I)$, where each $Y_i$ is actually a $\UU$-graph, and where $I$ is allowed to be a proper class or empty, may be provided with the {\em initial} (also {\em weak} or, in the case of a singleton family, {\em cartesian}) $\UU$-graph structure with respect to $(f_i)_{i\in I}$ (see Definition \ref{cartesiandefi} and Remark \ref{topologicalfunctorrem} for the general definitions), described by
	$$\mathfrak x\rightsquigarrow y\iff \forall\,i\in I: \UU f_i(\mathfrak x)\rightsquigarrow f_iy,$$
	for all $\mathfrak x\in \UU X, y\in X$. This $\UU$-graph structure on $X$ is largest with the property of making all maps $f_i$ monotone, and it is reflexive (and transitive) if the $\UU$-graph structure of each $Y_i$ is. (To check the transitivity claim, denote by $\tilde{f_i}:C\to C_i$ the restriction of the map $\UU f_i\times f_i$ to the convergence relations and observe that, when $\UU\pi_1(\mathfrak w)\rightsquigarrow \UU\pi_2(\mathfrak w)$ in $X$, then $\UU\pi_1^i(\UU\tilde{f_i}(\mathfrak w))\rightsquigarrow \UU\pi_2^i(\UU\tilde{f_i}(\mathfrak w)) $ in $Y_i$, for every $\mathfrak w\in \UU C$ and $i\in I$.) This means that each of the categories $\UU$-{\bf Gph}, $\UU$-{\bf RGph} and $\UU$-{\bf Spa} is {\em topological} over $\Set$ (for this notion, see Remark \ref{topologicalfunctorrem} or, for example,  \cite{AHS} and \cite{MonTop}). 
	
	(2) As topologicity is a self-dual property, we note that for a set $Y$ that comes with a family of maps $f_i:X_i\to Y\,(i\in I)$, where each $X_i$ is a (reflexive; reflexive and transitive) $\UU$-graph, there is a least structure on $Y$ (with the corresponding property) making all maps $f_i$ monotone, called {\em final} or, in the case of a singleton family, {\em cocartesian}. But this structure does not have as nice a description as its dual counterpart, especially for $\UU$-spaces. Still, topologicity over $\Set$ makes each of the categories $\UU$-{\bf Gph}, $\UU$-{\bf RGph}, and $\UU$-{\bf Spa}
	complete and cocomplete, with limits (colimits) formed as in $\Set$ and then provided with the initial (final) structure with respect to the limit projections (colimit injections, respectively).
	
	(3) The category $\UU$-{\bf RGph} is not just topological over $\Set$ but, unlike $\UU$-{\bf Gph} and $\UU$-{\bf Spa}, also a {\em quasitopos} (see, for example, \cite{AHS} or \cite{MonTop}). In particular, $\UU$-{\bf RGph} is cartesian closed and, hence, allows for the formation of ``well-behaved'' function spaces $\UU\text{-{\bf Spa}}(X,Y)$	for all of its objects $X,Y$, with the function-space structure given by the initial structure with respect to the family of evaluation maps. 
	\end{facts}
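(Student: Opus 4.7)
The plan is to address the three claims (1)--(3) in turn, with most of the real work concentrated in verifying the initial-lift formula of (1); the remaining items follow by categorical generalities together with a concrete description of the internal hom.

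For (1), I would first verify that the proposed relation $\rightsquigarrow$ on $X$ makes every $f_i$ monotone (immediate from the definition) and is the largest such structure: any $\UU$-graph structure $\rightsquigarrow\ap$ on $X$ making each $f_i:(X,\rightsquigarrow\ap)\to Y_i$ monotone satisfies $\mathfrak x\rightsquigarrow\ap y\Rightarrow\UU f_i(\mathfrak x)\rightsquigarrow f_iy$ for every $i$, so $\mathfrak x\rightsquigarrow\ap y\Rightarrow\mathfrak x\rightsquigarrow y$. Reflexivity is preserved because the naturality square of $\dot{(-)}:\mathrm{Id}\to\UU$ gives $\UU f_i(\dot x)=\dot{f_i x}$, hence $\dot x\rightsquigarrow x$ whenever each $Y_i$ is reflexive. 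Transitivity is the only delicate verification and proceeds exactly as sketched in the parenthetical hint: given $\mathfrak X\,\hat\rightsquigarrow\,\mathfrak y$ in $X$ witnessed by $\mathfrak w\in\UU C$, one pushes $\mathfrak w$ along $\UU\tilde f_i$ into $\UU C_i$ to witness $\UU\UU f_i(\mathfrak X)\,\hat\rightsquigarrow\,\UU f_i(\mathfrak y)$ in $Y_i$; combined with $\UU f_i(\mathfrak y)\rightsquigarrow f_iz$ and transitivity of $Y_i$ this yields $\Sigma\UU\UU f_i(\mathfrak X)\rightsquigarrow f_iz$, and the naturality square $\UU f_i\circ\Sigma=\Sigma\circ\UU\UU f_i$ rewrites the left-hand side as $\UU f_i(\Sigma\mathfrak X)$, giving $\Sigma\mathfrak X\rightsquigarrow z$ in $X$. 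This establishes topologicity of the underlying-set functor for all three categories.

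For (2), I would invoke the well-known duality theorem for topological functors: a faithful functor is topological if and only if its opposite is, so the existence of cocartesian (final) lifts is formally equivalent to that of cartesian ones. From here, completeness and cocompleteness are immediate, since a topological functor creates both small limits and small colimits: the underlying set of a (co)limit is computed in $\Set$ and then carries the initial (final) structure with respect to the (co)limit (co)cone. I would not attempt a direct description of the final structure for $\UU$-spaces, since even transitivity need not transport cleanly along cocones.

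For (3), I would recall the standard fact that the topological category of reflexive relational $\UU$-coalgebras is a quasitopos (and in particular cartesian closed), and sketch the construction of the function space: for $X,Y$ in $\UU\text{-}\mathbf{RGph}$, equip the underlying set $\UU\text{-}\mathbf{RGph}(X,Y)$ with the initial structure with respect to the family of evaluation maps $\mathrm{ev}_x:[X,Y]\to Y$, $g\mapsto g(x)$, indexed by $x\in X$; reflexivity of $[X,Y]$ follows from reflexivity of $Y$ via the same naturality argument used in (1), and the adjunction $(-\times X)\dashv[X,-]$ is verified by checking that currying and uncurrying preserve the initial structure. The analogous construction restricted to $\UU$-spaces yields a well-behaved function space in $\UU\text{-}\mathbf{Spa}$ whenever the initial structure happens to be transitive, which is the case singled out in the final sentence.

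The main obstacle is the transitivity verification in (1): one must be careful to produce the witnessing ultrafilter in $\UU C_i$ from that in $\UU C$ by functoriality, and to combine the naturality of $\Sigma$ with transitivity of each $Y_i$ in the correct order. Everything else is either bookkeeping or a direct appeal to standard facts about topological functors and quasitoposes.
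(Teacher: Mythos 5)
Your proposal is correct and follows essentially the same route as the paper, whose entire argument for these Facts consists of the parenthetical transitivity hint in (1) --- which you flesh out exactly as intended, pushing the witness $\mathfrak w\in\UU C$ along $\UU\tilde{f_i}$ and invoking the naturality of $\Sigma$ --- together with the self-duality of topologicity for (2) and an appeal to the cited literature for (3). The only caution concerns your sketch of (3): the initial structure with respect to the point-evaluations $\mathrm{ev}_x$ is pointwise convergence, which is in general \emph{not} the exponential structure in $\mathbf{PsTop}\cong\UU\text{-}\mathbf{RGph}$ (that is continuous convergence), so ``checking that currying and uncurrying preserve the initial structure'' would not go through literally; this imprecision is, however, inherited from the wording of the statement itself, and the cartesian closedness you need is exactly the standard fact the paper cites.
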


\begin{defi}\label{definitionUAlg} 
\rm 
We call a $\UU$-space $(X,\rightsquigarrow)$ {\em algebraic} if its convergence relation is (the graph of) a map; that is, if 
\begin{itemize}
\item[(A)] there is a map $c:\UU X\to X$ with $(\mathfrak x\rightsquigarrow y \iff c(\mathfrak x)= y),$
for all $\mathfrak x\in \UU X, y\in X$;
\end{itemize} equivalently (in the presence of Choice), if every ultrafilter has at least, and at most, one point of convergence:
\begin{itemize}
\item[(K)] for every $\mathfrak x\in \UU X$, there is a (tacitly chosen) point  $y\in X$ with $ \mathfrak x\rightsquigarrow y;$
\item[(H)] if an ultrafilter $\mathfrak x\in \UU X$ satisfies $ \mathfrak x\rightsquigarrow y$ and $\mathfrak x\rightsquigarrow z$, then necessarily $y=z\,$.
\end{itemize}
Clearly, such a map $c$ (if it exists) is uniquely determined by the convergence relation. 

\medskip
We have the following full subcategories of $\UU$-{\bf Spa}:
\begin{itemize}
\item $\UU$-{\bf KSpa}, containing all the $U$-spaces satisfying condition (K);
\item $\UU$-{\bf HSpa}, containing all the $U$-spaces satisfying condition (H);
\item $\UU$-{\bf ASpa}, containing all algebraic $U$-spaces; that is:
\begin{center}
	$\UU$-{\bf ASpa} $=\UU$-{\bf KSpa}\;$\cap$	$\UU$-{\bf HSpa}.
		\end{center}
 
\end{itemize}
\end{defi}
At the moment, we are primarily interested in the category $\UU$-{\bf Alg} and first give an explanation for the name ``algebraic''.

\begin{pro}\label{Ualgebras}
 A map $c: \UU X\to X$ makes a set $X$ an algebraic $\UU$-space via $\mathrm{(A)}$ if, and only if, $c$ makes the diagram 
\begin{center}
	$\xymatrix{X\ar[r]^{\dot{(-)}}\ar[rd]_{1_X} & \UU X\ar[d]^{c} && \UU\UU X\ar[ll]_{Uc}\ar[d]^{\Sigma} &&& \UU X\ar[rr]^{\UU f}\ar[d]_{c} && \UU Y\ar[d]^{d}\\
	& X && \UU X\ar[ll]_{c} &&& X\ar[rr]^f && Y\\
	}$
\end{center}
on the left commutative. Furthermore, given also a $\UU$-space $Y$ that is algebraic qua $d:\UU Y\to Y$, then a map $f:X\to Y$ is monotone if, and only if, the diagram on the right commutes.
\end{pro}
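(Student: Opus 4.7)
The plan is to translate the relational axioms (R), (T) of Definition \ref{definitionUspace} directly into the commutativity of the two left-hand diagrams, and then to observe that the monotonicity condition becomes the right-hand square once both convergence relations are functional.

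First I would exploit the fact that, under (A), the graph $C\subseteq\UU X\times X$ of $\rightsquigarrow$ coincides with the graph of the function $c$, so the projection $\pi_1\colon C\to\UU X$ is a bijection with inverse $\iota\colon\mathfrak x\mapsto(\mathfrak x,c(\mathfrak x))$, and $\pi_2=c\circ\pi_1$. Applying the functor $\UU$ gives $\UU\pi_2=\UU c\circ\UU\pi_1$ with $\UU\pi_1$ still bijective; consequently, for any $\mathfrak X\in\UU\UU X$ and $\mathfrak y\in\UU X$, the condition $\mathfrak X\,\hat{\rightsquigarrow}\,\mathfrak y$ reduces to the equation $\mathfrak y=\UU c(\mathfrak X)$, and it can always be witnessed by $\mathfrak w=\UU\iota(\mathfrak X)\in\UU C$.

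With this key simplification, the ``only if'' direction is immediate: axiom (R) applied to an arbitrary $x\in X$ gives $c(\dot x)=x$, i.e.\ the left-hand triangle, and axiom (T) applied to an arbitrary $\mathfrak X\in\UU\UU X$, together with $\mathfrak y:=\UU c(\mathfrak X)$ and $z:=c(\mathfrak y)$, yields $c(\Sigma\mathfrak X)=c(\UU c(\mathfrak X))$, i.e.\ the left-hand square. Conversely, assuming both diagrams commute, (R) follows directly from the triangle, and for (T) the translation above shows that whenever $\mathfrak X\,\hat{\rightsquigarrow}\,\mathfrak y$ and $\mathfrak y\rightsquigarrow z$, one has
$$c(\Sigma\mathfrak X)=c(\UU c(\mathfrak X))=c(\mathfrak y)=z,$$
which is exactly $\Sigma\mathfrak X\rightsquigarrow z$.

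For the monotonicity claim, since both $\rightsquigarrow$ on $X$ and on $Y$ are functional, the implication $\mathfrak x\rightsquigarrow y\Rightarrow\UU f(\mathfrak x)\rightsquigarrow f(y)$ reads, for every $\mathfrak x\in\UU X$, as the equality $d(\UU f(\mathfrak x))=f(c(\mathfrak x))$, which is the commutativity of the right-hand square. The only substantive step is the functional collapse of the derived relation $\hat{C}$; once that is recorded, the rest is routine unpacking of the definitions.
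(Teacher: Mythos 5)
Your proposal is correct and follows essentially the same route as the paper: identify $C$ with $\UU X$ via the bijective projection $\pi_1$ (so that $\pi_2$ becomes $c$), observe that $\hat{C}$ is then the graph of $\UU c$, and translate (R) and (T) into the equations $c(\dot{x})=x$ and $c(\UU c(\mathfrak X))=c(\Sigma\mathfrak X)$, with monotonicity collapsing to the right-hand square. The only difference is that you spell out the witness $\mathfrak w=\UU\iota(\mathfrak X)$ and both implications explicitly, where the paper leaves these as immediate.
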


\begin{proof}
To say that $C\subseteq \UU X\times X$ is the graph of a map $c:\UU X\to X$ means that $C$ is the image  of the map $\langle 1_{\UU X},c\rangle:	 \UU X\to \UU X\times X$, which lets us identify $C$ with $\UU X$, and the projections $\pi_1:C\to \UU X,\,\pi_2:C\to X$ with the maps $1_{\UU X}, c$, respectively. But this makes immediately clear that then the relation $\hat{C}\subseteq \UU\UU X\times \UU X$ of Definition \ref{definitionUspace} is the graph of the map $\UU c:\UU\UU X\to \UU X$. Consequently, conditions (R) and (T) translate respectively  to $c(\dot{x})=x$ and $c(\UU c(\mathfrak X))=c(\Sigma \mathfrak X)$ for all $x\in X$ and $\mathfrak X\in \UU\UU X$, which is the commutativity of the left diagram. The monotonicity claim is obvious.
\end{proof}
\begin{rems}\label{laxUalgebras}
\rm 
(1) Pairs $(X,c)$, consisting of a set $X$ and a map $c:\UU X\to X$ that lets the above diagram on the left commute, are the objects of the {\em Eilenberg-Moore category}\; $\Set^{\UU}$ of the monad $\UU$, usually referred to as $\UU$-{\em algebras}; their (homo)morphisms are described by the diagram on the right (see \cite{MacLane1971}) . Hence, Proposition \ref{Ualgebras} implies that the categories
$$\Set^{\UU}\cong \UU\text{-{\bf ASpa}}$$
are concretely isomorphic, so that the induced-graph functor functor $(X,c)\longmapsto(X,C)$ is bijective and commutes with the underlying-set functors.

(2) If one writes a relation $R\subseteq X\times Y$ as an arrow $R:X\to Y$ (that is: as a morphism in the category {\bf Rel} of sets and relations, with their relational composition) and identifies maps of sets with their graphs, then the objects and morphisms of the category $\UU$-{\bf Spa} are equivalently described by the {\em laxly} commuting diagrams
\begin{center}
	$\xymatrix{X\ar[r]^{\dot{(-)}}\ar[rd]_{1_X}^{\subseteq} & \UU X\ar[d]^{C\qquad\;\,\subseteq} && \UU\UU X\ar[ll]_{\hat{\UU} C}\ar[d]^{\Sigma} &&& \UU X\ar[rr]^{\UU f}\ar[d]_{C_X}^{\qquad\;\;\;\subseteq} && \UU Y\ar[d]^{C_Y}\\
	& X && \UU X\ar[ll]^{C} &&& X\ar[rr]_f && Y\\
	}$
\end{center}
so that $1_X\subseteq C\dot{(-)},\;C(\hat{\UU}C)\subseteq C\Sigma$, and $fC_X\subseteq C_Y(\UU f)$; here we have written $\hat{\UU}C$ for the relation $\hat{C}$ of Definition \ref{definitionUspace}, to emphasize the extension of $\UU$ from maps to relations. Briefly: the category $\UU$-{\bf Spa} is  presented as the category of {\em lax relational $\UU$-algebras}.
\end{rems}

In what follows, we investigate further the full embeddings
\begin{center}
$\xymatrix{ && \UU\text{-{\bf KSpa}}\ar[rrd] && && \\
\UU\text{-\bf{ASpa}}\ar[rru]\ar[rrd] && && \UU\text{-{\bf Spa}}\ar[rr] && \UU\text{-{\bf RGph}} \\
&& \UU\text{-{\bf HSpa}}\ar[rru] && &&\\
  }$	
\end{center}
with the primary goal of describing a reflector (= left adjoint) $\UU\text{-{\bf RGph}}\to \UU$-{\bf Alg} as concretely as possible in this context. We start by picking some low-hanging fruit:

\begin{pro}\label{lowhangingfruit}
\begin{itemize}
\item[{\em (1)}] For a surjective monotone map $f:X\to Y$ of $\UU$-spaces, if $X$ satisfies {\em (K)}, so does $Y$.
\item[{\em(2)}] The direct product of a family of $\UU$-spaces satisfying {\em (K)} satisfies {\em (K)} again.
\item[{\em (3)}] For a (possibly large or empty) point-separating family of monotone maps $f_i:X\to Y_i\;(i\in I)$ of $\UU$-spaces, if every $Y_i$ satisfies {\em (H)}, so does $X$.
\end{itemize}	
\end{pro}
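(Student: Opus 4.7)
The plan is to handle the three claims in turn, each by unravelling the definitions from Definition \ref{definitionUspace} and Definition \ref{definitionUAlg} together with the initial-structure description in Facts \ref{USpatopological}(1).

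For (1), the key preliminary is the classical set-theoretic fact that the functor $\UU$ preserves surjections: if $f:X\to Y$ is surjective, then for every $\mathfrak y\in\UU Y$ the family $\{f\inv(B)\st B\in\mathfrak y\}$ has the finite intersection property (since each $f\inv(B)$ is nonempty when $B$ is), hence extends to some $\mathfrak x\in\UU X$ with $\UU f(\mathfrak x)=\mathfrak y$. Given $\mathfrak y\in\UU Y$, I would lift it to such an $\mathfrak x$, invoke (K) for $X$ to obtain $x\in X$ with $\mathfrak x\rightsquigarrow x$, and then apply monotonicity to conclude $\mathfrak y=\UU f(\mathfrak x)\rightsquigarrow f(x)$.

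For (2), I would exploit the fact, noted in Facts \ref{USpatopological}(2), that the direct product $X=\prod_{i\in I}X_i$ carries the initial $\UU$-space structure with respect to the product projections $\pi_i:X\to X_i$, as described in Facts \ref{USpatopological}(1): one has $\mathfrak x\rightsquigarrow y$ in $X$ iff $\UU\pi_i(\mathfrak x)\rightsquigarrow \pi_i(y)$ in $X_i$ for every $i\in I$. Given $\mathfrak x\in \UU X$, property (K) for each $X_i$ supplies a point $y_i\in X_i$ with $\UU\pi_i(\mathfrak x)\rightsquigarrow y_i$, and the axiom of choice lets me form the tuple $y=(y_i)_{i\in I}\in X$, which by the initiality description satisfies $\mathfrak x\rightsquigarrow y$.

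For (3), the argument is essentially immediate: if $\mathfrak x\rightsquigarrow y$ and $\mathfrak x\rightsquigarrow z$ in $X$, then monotonicity of each $f_i$ gives $\UU f_i(\mathfrak x)\rightsquigarrow f_i(y)$ and $\UU f_i(\mathfrak x)\rightsquigarrow f_i(z)$, whence (H) for $Y_i$ forces $f_i(y)=f_i(z)$; point-separation of the family then yields $y=z$. The vacuous case $I=\emptyset$ is covered by the same argument, since point-separation there forces $X$ to have at most one element. None of the three parts presents a genuine obstacle; the only non-formal ingredients are the surjectivity-preservation property of $\UU$ in (1) and the use of choice in (2).
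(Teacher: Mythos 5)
Your proofs of all three parts are correct, and parts (2) and (3) follow the paper's route exactly: the paper obtains (2) by combining the initial-structure description of the product with (K) in each factor and Choice, and (3) by the same monotonicity-plus-point-separation chase you give (the detailed versions appear, in categorical form, under Theorem~\ref{lowhangingfruitgeneralized}). The one place where you genuinely diverge is the preliminary step of (1). You establish that every $\mathfrak y\in\UU Y$ lifts along $\UU f$ by extending the filter base $\{f^{-1}(B)\mid B\in\mathfrak y\}$ to an ultrafilter; the paper instead observes that, with Choice, the surjection $f$ has a section in $\Set$, which the functor $\UU$ must preserve, so that $\UU f$ is a (split) surjection. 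Both arguments are valid, but the paper's version is chosen deliberately: it uses nothing about $\UU$ beyond its functoriality, so the proof survives the $T$-for-$\UU$ exchange of Confession~\ref{confession2} and the passage to a general base category in Theorem~\ref{lowhangingfruitgeneralized}(1), where preservation of (regular) epimorphisms by $T$ becomes an explicit hypothesis. Your filter-extension argument appeals to the concrete definition of $\UU$ and of $\UU f$ and would not transfer to an arbitrary monad; within the $\UU$-setting it buys you a marginally weaker choice principle (the ultrafilter lemma rather than full Choice), at the cost of that portability.
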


\begin{proof} (1) Since (with Choice granted) the surjection $f$ has a section in $\Set$, which the functor $\UU$ must preserve, also the map $\UU f$ is surjective. Hence, every $\mathfrak y\in \UU Y$ may be written as $\mathfrak y=\UU f(\mathfrak x)$, with $\mathfrak x\rightsquigarrow x$ for some $x\in X$, by hypothesis; now $\mathfrak y\rightsquigarrow fx$ follows. Using Facts \ref{USpatopological} (1), (2) (and Choice) one proves the assertions
(2) and (3) similarly. Detailed proofs	appear under Theorem \ref{lowhangingfruitgeneralized} in a more general context.
\end{proof}
\begin{cor}\label{Hausdorffreflection}
$\UU\text{-{\bf HSpa}}$ is (regular epi)-reflective in $\UU\text{-{\bf Spa}}$, and $\UU\text{-{\bf ASpa}}$ is (regular epi)-reflective in $\UU\text{-{\bf KSpa}}$.
\end{cor}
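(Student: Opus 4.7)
The plan is to apply the standard reflection criterion in an $(\mathcal{E},\mathcal{M})$-structured complete well-powered category: if $\mathcal{B}\subseteq\mathcal{A}$ is full, replete, and closed under products and $\mathcal{M}$-subobjects, then $\mathcal{B}$ is $\mathcal{E}$-reflective. In our situation $\mathcal{A}=\UU$-{\bf Spa} is complete by Facts \ref{USpatopological}(2), well-powered thanks to the topologicity asserted in Facts \ref{USpatopological}(1), and it carries the (regular epi, mono)-factorization system in which regular epimorphisms are quotient morphisms (surjections carrying the final structure) and monomorphisms are plain injections of underlying sets.

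For the first assertion, with $\mathcal{B}=\UU$-{\bf HSpa}, I would check the two closure conditions by direct appeal to Proposition \ref{lowhangingfruit}(3). Closure under products: the projections $\pi_i:\prod_{j} Y_j\to Y_i$ form a point-separating family of monotone maps into (H)-spaces, so the product is (H). Closure under mono-subobjects: an injective monotone map $m:X\hookrightarrow Y$ is itself a point-separating family of size one, so when $Y$ is (H), $X$ is (H). Concretely, the reflection of $X\in\UU$-{\bf Spa} is then produced by choosing (via well-copoweredness) a small representing set $\{q_i:X\twoheadrightarrow X_i\}_{i\in I}$ of $\UU$-{\bf HSpa}-quotients of $X$, forming the diagonal $\langle q_i\rangle:X\to\prod_i X_i$, and factoring it as $X\twoheadrightarrow\hat{X}\hookrightarrow\prod_i X_i$. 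By the two closure properties, $\hat{X}\in\UU$-{\bf HSpa}, and the universal property is immediate: any monotone $f:X\to Y$ with $Y\in\UU$-{\bf HSpa}, factored itself as $X\twoheadrightarrow\bar{X}\hookrightarrow Y$, identifies $\bar{X}$ with some $X_i$, allowing $f$ to be extended as the composite $\hat{X}\hookrightarrow\prod_j X_j\to X_i\hookrightarrow Y$.

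For the second assertion, I would show that the same reflection descends to $\UU$-{\bf KSpa}. Given $X\in\UU$-{\bf KSpa}, its $\UU$-{\bf HSpa}-reflection $r:X\twoheadrightarrow\hat{X}$ is a regular epi in $\UU$-{\bf Spa}, hence a surjective monotone map; by Proposition \ref{lowhangingfruit}(1), $\hat{X}$ then satisfies (K), so that $\hat{X}\in\UU$-{\bf KSpa}$\cap\UU$-{\bf HSpa}$=\UU$-{\bf ASpa}. Any monotone map from $X$ to a $\UU$-{\bf ASpa}-object is a fortiori a map to a $\UU$-{\bf HSpa}-object and therefore factors uniquely through $r$, confirming $r$ as the $\UU$-{\bf ASpa}-reflection of $X$ inside $\UU$-{\bf KSpa}.

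The only point requiring care is the size bookkeeping used to pick the small representing set of $\UU$-{\bf HSpa}-quotients of $X$; this is handled cleanly by the well-copoweredness that topologicity over $\Set$ provides for free. Once that is granted, the entire corollary is propelled by the three elementary closure results already assembled in Proposition \ref{lowhangingfruit}.
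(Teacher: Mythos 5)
Your proof is correct, but it takes the route that the paper only gestures at and does not write out: the paper's opening sentence for this corollary cites exactly your ``closed under products and mono-subobjects, hence $\mathcal{E}$-reflective'' Adjoint-Functor-Theorem argument as well known, and then gives a different, more hands-on proof. There, the reflection of $X$ into $\UU\text{-{\bf HSpa}}$ is constructed directly as the quotient $X/\!\sim$, where $x\sim x'$ iff $fx=fx'$ for every monotone $f:X\to Y$ with $Y$ satisfying (H), equipped with the final structure along the projection $p$; condition (H) for $X/\!\sim$ is then verified by a two-line convergence computation using the induced maps $f^{\ast}$, and the second assertion follows, as in your argument, from Proposition \ref{lowhangingfruit}(1). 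The trade-off is roughly this: your closure-criterion argument is modular, makes the role of Proposition \ref{lowhangingfruit}(3) transparent, and transfers verbatim to the general setting of Section 4 (where the paper indeed runs essentially your construction in Theorems \ref{lowhangingfruitgeneralized}(4) and \ref{complregrefl}(4)), at the cost of invoking the (regular epi, mono)-factorization and the co-well-poweredness bookkeeping you rightly flag; the paper's quotient construction is more elementary and exhibits the reflection morphism concretely as a single explicit quotient map, with the same size issue resolved in the subsequent remark by writing $\sim\;=\bigcap_f\sim_f$ over a set of representatives. Both proofs are complete and rest on the same two items of Proposition \ref{lowhangingfruit}.
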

\begin{proof}
As is well known, the two assertions follow successively from items (3) and (1) of the Proposition, by Adjoint-Functor-Theorem methods (see, for example, \cite{MacLane1971}, p. 135). We nevertheless sketch a short proof, as follows. Define an equivalence relation on a given $\UU$-space	 $X$ by
$$ x\sim x'\iff\forall f:X\to Y \text{ monotone, } Y\in \UU\text{-{\bf HSpa}}: fx=fx'$$
and provide $X/\!\sim$ with the final $\UU$-space structure with respect to the projection $p:X\to X/\!\sim$, which makes $p$ a regular epimorphism in $\UU$-{\bf Spa}. By design, every morphism $f:X\to Y$ with $Y\in \UU\text{-{\bf HSpa}}$ factors uniquely through $p$, by a morphism $f^{\ast}:X/\!\sim\,\longrightarrow Y$. Moreover, $X/\!\sim$ satisfies (H); indeed, if $\UU p(\mathfrak x)\rightsquigarrow px$ and $\UU p(\mathfrak x)\rightsquigarrow px'$ in $X/\!\sim$, then for every $f:X\to Y$ as above,
$$\UU f(\mathfrak x)=\UU f^{\ast}(\UU p(\mathfrak x))\rightsquigarrow f^{\ast}(px)=fx\quad\text{and}\quad \UU f(\mathfrak x)=\UU f^{\ast}(\UU p(\mathfrak x))\rightsquigarrow f^{\ast}(px')=fx',$$
which implies $fx=fx'$ since $Y$ satisfies (H); hence, $x\sim x'$ follows. Consequently. $p$ serves as a reflection of $X$ into $\UU\text{-{\bf HSpa}}$. Furthermore, if $X$ happens to satisfy (K), so does $X/\!\sim$, by the Proposition.
 \end{proof}
 \begin{rems}
 \rm 
(1) The definition of the equivalence relation $\sim$ in the above proof formally uses a universal quantification over a proper class of data. But the presentation $\sim\; =\bigcap_f\!\sim_f$ (with $\sim_f$ denoting the equivalence relation on $X$ induced by $f$) shows that (with Choice) one may restrict oneself to using a set of data.

(2) Their reflectivity makes the subcategories of Corollary \ref{Hausdorffreflection} closed under limits in their ambient categories. While property (K) is stable under direct products in  $\UU\text{-{\bf Spa}}$, the equalizer of parallel morphisms in $\UU\text{-{\bf KSpa}}$ formed in $\UU\text{-{\bf Spa}}$ generally fails to satisfy (K) (as will become obvious in Section 3). 
 \end{rems}
In order to present the left adjoint of $\UU\text{-{\bf ASpa}}\hookrightarrow \UU\text{-{\bf RGph}}$ we will work with the isomorphic copy $\Set^{\UU}$ of $\UU\text{-{\bf Alg}}$ and first note that, for every set $X$, one has the {\em free} $\UU$-algebra $(\UU X,\Sigma_X)$ over $X$, so that every map $f:X\to Y$ to a $\UU$-algebra $(Y,c)$ factors as $f=f^{\sharp}\dot{(-)}$, with a uniquely determined homomorphism $f^{\sharp}:\UU X\to Y$; namely, $f^{\sharp}=(\UU f)c$. This follows formally from the monad structure of $\UU$.

Let us now assume that $X$ carries the structure $C=\,\rightsquigarrow$ of a reflexive $\UU$-graph and form the equivalence relation (= kernel pair in $\Set$) $K=\,\sim_{\pi_2}$ induced by $\pi_2:C\to X$; that is: $K=\{((\mathfrak x,y),(\mathfrak z,y))\mid \mathfrak x\rightsquigarrow y\text{ and }\mathfrak z\rightsquigarrow y \text{ in } X\}$. With the projections $p_1, p_2: K\to C$ and $\pi_1: C\to UX$ we can then state:

\begin{lm}\label{StoneCechLemma}
 For every homomorphism $q:(UX,\Sigma_X)\to (Q,c)$ onto a $\UU$-algebra $(Q,c)$, the following assertions are equivalent:
\begin{itemize}
\item[{\em (i)}] $\sim_q$ is the least congruence relation on $(\UU X,\Sigma_X)$ making the map $q\dot{(-)}:X\to Q$ monotone;
\item[{\em (ii)}] $q$ is the coequalizer of $(\pi_1p_1)^{\sharp},(\pi_1p_2)^{\sharp}:(\UU K,\Sigma_K)\rightrightarrows (\UU X,\Sigma_X)$ in $\Set^{\UU}$, where $f^{\sharp}$ denotes the mate of $f$ under adjunction (see their defining property in the diagram below);
\item[{\em (iii)}] $q\dot{(-)}:X\to Q$ is the reflection of morphism f $(X,\rightsquigarrow)$ into $\UU\text{-{\bf ASpa}}\cong\Set^{\UU}$.
\end{itemize}	
\end{lm}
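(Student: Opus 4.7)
I would organize the argument around the free-algebra adjunction for $\Set^{\UU}$, which gives, for any $\UU$-algebra $(Y,d)$, the bijection $f\longleftrightarrow f^{\sharp}$ between $\Set$-maps $f:X\to Y$ and $\UU$-algebra homomorphisms $f^{\sharp}=d\cdot\UU f:(\UU X,\Sigma_X)\to (Y,d)$, inverse to $h\mapsto h\dot{(-)}_X$. The central computation is that, with $Y$ viewed as a $\UU$-space via $d$, the map $f$ is monotone iff for every $\mathfrak{x}\rightsquigarrow y$ in $X$ one has $f^{\sharp}(\mathfrak{x})=f(y)$; the unit law $\Sigma_X\cdot\UU\dot{(-)}=1_{\UU X}$ identifies $f(y)$ with $f^{\sharp}(\dot{y})$, and, using the reflexivity $\dot{y}\rightsquigarrow y$ to transit any common-limit pair through $\dot{y}$, this in turn becomes $f^{\sharp}(\mathfrak{x})=f^{\sharp}(\mathfrak{z})$ for all $((\mathfrak{x},y),(\mathfrak{z},y))\in K$, i.e.\ $f^{\sharp}\cdot\pi_1 p_1=f^{\sharp}\cdot\pi_1 p_2$. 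By the adjunction, this last equation is equivalent to $f^{\sharp}$ coequalizing $(\pi_1p_1)^{\sharp}$ and $(\pi_1p_2)^{\sharp}$ in $\Set^{\UU}$.

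With this equivalence in hand, the three implications reduce to adjunction bookkeeping. For (i)$\Leftrightarrow$(ii), I would apply the central computation to $f=q_\theta\dot{(-)}$, where $q_\theta$ is the quotient map of an arbitrary congruence $\theta$ on $(\UU X,\Sigma_X)$; since $q_\theta$ is already a homomorphism, $(q_\theta\dot{(-)})^{\sharp}=q_\theta$, so $q_\theta\dot{(-)}$ is monotone iff $q_\theta$ coequalizes the parallel pair, whence the least such $\theta$ is the same on both sides. For (ii)$\Rightarrow$(iii), a monotone $f:X\to Y$ into a $\UU$-algebra makes $f^{\sharp}$ coequalize the pair, so $f^{\sharp}=g\cdot q$ for a unique homomorphism $g:Q\to Y$; precomposing with $\dot{(-)}_X$ gives $g\cdot q\dot{(-)}=f$, and $g$ is the unique such homomorphism since $q$ is (regular) epi in $\Set^{\UU}$ and $g\cdot q$ is determined by its restriction along $\dot{(-)}_X$. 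Conversely, (iii)$\Rightarrow$(ii): any homomorphism $h:\UU X\to Y$ coequalizing the pair satisfies $h\dot{(-)}$ monotone by the central computation; hence $h\dot{(-)}$ factors uniquely through $q\dot{(-)}$ by a homomorphism $g$, and remating yields $h=g\cdot q$, giving the coequalizer universal property for $q$.

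The subtlest ingredient, and where I would be most careful, is the \emph{central computation} itself, specifically the role of reflexivity of $\rightsquigarrow$. The relation $K$ as defined records only pairs $(\mathfrak{x},\mathfrak{z})$ sharing a common limit $y$ in $X$, with no syntactic mention of the principal ultrafilter $\dot{y}$; whereas monotonicity of $q\dot{(-)}$ forces each $\mathfrak{x}$ with $\mathfrak{x}\rightsquigarrow y$ to be identified precisely with $\dot{y}$. The axiom $\dot{y}\rightsquigarrow y$ is exactly what permits the transit $\mathfrak{z}:=\dot{y}$, thereby collapsing these two \emph{a priori} distinct requirements into one. Without reflexivity, the coequalizer description of the reflection into $\UU$-{\bf ASpa} would simply fail; everything else in the proof is adjunction formalism.
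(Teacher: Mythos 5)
Your proposal is correct and follows essentially the same route as the paper's proof: the same central equivalence (monotonicity of $f$ $\iff$ $f^{\sharp}(\mathfrak x)=f^{\sharp}(\dot x)$ whenever $\mathfrak x\rightsquigarrow x$ $\iff$ $f^{\sharp}\pi_1p_1=f^{\sharp}\pi_1p_2$, with reflexivity justifying the specialization of $\mathfrak z$ to $\dot x$), followed by adjunction bookkeeping for (i)--(iii). Your write-up merely spells out in more detail the steps the paper dismisses as routine, and your remark on the role of reflexivity matches exactly the paper's implicit use of it.
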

\begin{proof}
For any map $f:X\to Y$ into a $\UU$-algebra $(Y,c_Y)$, consider the diagram
\begin{center}
$\xymatrix{	\UU K\ar@/^0.3pc/[rr]^{(\pi_1p_1)^{\sharp}}\ar@/_0.3pc/[rr]_{(\pi_1p_2)^{\sharp}\quad} && \UU X\ar[rr]^{f^{\sharp}} && Y\\
& K\ar[ul]^{\dot{(-)}}\ar@/^0.3pc/[ru]|{\pi_1p_1}\ar@/_0.3pc/[ru]_{\pi_1p_2} && X\ar[ul]^{\dot{(-)}}\ar[ur]_f & \\
}$
\end{center}
By definition, monotonicity of $f$ means that $c_Y(\UU f(\mathfrak x)) = fx$ holds whenever $\mathfrak x\rightsquigarrow x$ in $X$; equivalently, that $f^{\sharp}(\mathfrak x)= f^{\sharp}(\dot{x})$ whenever $\mathfrak x\rightsquigarrow x$, or that the congruence relation $\sim_{f^{\sharp}}$ on $(\UU X,\Sigma_X)$ induced by the homomorphism $f^{\sharp}$ contains all pairs $(\mathfrak x, \dot{x})$ with $\mathfrak x\rightsquigarrow x$. A further equivalent condition for the monotonicity of $f$ is the equality 
$$ f^{\sharp}(\pi_1p_1)^{\sharp}=f^{\sharp}(\pi_1p_2)^{\sharp}\quad\text{or, equivalently,}\quad f^{\sharp}\pi_1p_1=f^{\sharp}\pi_1p_2;$$
indeed, the latter equality means precisely $f^{\sharp}(\mathfrak x)=f^{\sharp}(\mathfrak z)$ whenever $\mathfrak x\rightsquigarrow x$ and $\mathfrak z\rightsquigarrow x$ in $X$, and one may equivalently specialize $\mathfrak z$ to $\dot{x}$.	

Now, considering in particular the map $e=q\dot{(-)}$, so that $e^{\sharp}=q$, as well as the fact that, for a monotone map $f:X\to Y$, the homomorphism $f^{\sharp}$ factors through $q$ by a (necessarily unique) homomorphism $\overline{f}:Q\to Y$ precisely when $\sim_q\,\subseteq\,\sim_{f^{\sharp}}$, one routinely confirms the claimed equivalence of statements (i--iii).
\end{proof}

The Lemma suggests two methods of proof of the following theorem, (i) one that applies Adjoint-Functor-Theorem methods , and (ii) another one that relies on the existence of coequalizers in $\UU\text{-{\bf ASpa}}$, which may be secured in, again, two different ways again. We sketch all three resulting methods, but note that none of them may be considered constructive.

\begin{theorem}\label{StoneCechTheorem}
	The category $\UU\text{-{\bf ASpa}}$ is reflective in $\UU\text{-{\bf RGph}}$ and, hence, also in  $\UU\text{-{\bf Spa}}$.
\end{theorem}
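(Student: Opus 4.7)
The plan is to invoke Lemma \ref{StoneCechLemma}: for a given reflexive $\UU$-graph $(X,\rightsquigarrow)$, it suffices to produce the coequalizer in $\Set^{\UU}$ of the parallel pair $(\pi_1 p_1)^{\sharp}, (\pi_1 p_2)^{\sharp}: (\UU K,\Sigma_K) \rightrightarrows (\UU X,\Sigma_X)$. Once this coequalizer $q: (\UU X,\Sigma_X) \to (Q,c)$ is at hand, the composite $q\dot{(-)}: X \to Q$ is the desired reflection morphism by clauses (ii) and (iii) of the Lemma, and the reflectivity of $\UU\text{-{\bf ASpa}}$ inside $\UU\text{-{\bf Spa}}$ follows automatically since the latter is a full subcategory of $\UU\text{-{\bf RGph}}$ that contains $\UU\text{-{\bf ASpa}}$.

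The cleanest route to the required coequalizer is to quote the cocompleteness of $\Set^{\UU}$. As $\Set$ is cocomplete and $\UU$ is a $\Set$-monad, Linton's theorem (see \cite{MacLane1971}) yields all colimits in $\Set^{\UU}$. Alternatively, one observes that the pair in question is reflexive: the reflexivity condition $\dot{x}\rightsquigarrow x$ provides a set-theoretic section $X \to C$ of $\pi_2$, inducing via $\dot{(-)}$ a common splitting of the two maps, and one then appeals to the classical fact that reflexive coequalizers in $\Set^{\UU}$ are created by the forgetful functor to $\Set$.

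An Adjoint-Functor-Theorem proof offers a second route. Here one verifies that the inclusion $\UU\text{-{\bf ASpa}} \hookrightarrow \UU\text{-{\bf RGph}}$ preserves limits: products and equalizers in $\Set^{\UU}$ are lifted underlyingly from $\Set$ via the algebra operations, and a short check confirms that the convergence relation induced by the lifted operation agrees with the initial $\UU$-graph structure on the limiting cone. For the solution-set condition, any candidate reflection $e: X \to (Q,c)$ factors as $e = q\dot{(-)}$ through the surjective homomorphism $q = c \circ \UU e: \UU X \to Q$, so $|Q| \le |\UU X|$. A third, still more direct method is to form the intersection of all $\UU$-congruences on $(\UU X,\Sigma_X)$ that contain every pair $(\mathfrak{x}, \dot{x})$ with $\mathfrak{x}\rightsquigarrow x$; this intersection is itself a $\UU$-congruence and its quotient provides the reflection by clause (i) of the Lemma.

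The main obstacle common to all three paths is non-constructivity. Linton's theorem constructs colimits by a transfinite iteration, the AFT argument relies on a cardinality bound combined with Choice, and the congruence-intersection argument requires cutting a formally class-sized intersection down to a set via a size estimate on $Q$. None of these gives any explicit hold on the resulting algebra, in stark contrast to the elementary quotient used for the Hausdorff reflection in Corollary \ref{Hausdorffreflection}; this non-effectiveness is the price paid for treating the reflexive $\UU$-graph structure as an opaque relation rather than deriving the reflection from neighbourhood- or open-set data.
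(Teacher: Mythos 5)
Your overall strategy is sound, and two of your routes land essentially on the paper's own arguments: the Adjoint-Functor-Theorem route, with the cardinality bound obtained by corestricting $e^{\sharp}$ to its image, is exactly the paper's method (i), and the congruence-intersection route is the same construction rephrased via Lemma \ref{StoneCechLemma}(i). Citing Linton's cocompleteness theorem for $\Set^{\UU}$ is also legitimate, though it is a heavier external input than anything the paper uses. Your observation that the pair $(\pi_1p_1)^{\sharp},(\pi_1p_2)^{\sharp}$ is reflexive is correct: $x\mapsto((\dot{x},x),(\dot{x},x))$ is a map $X\to K$ whose composites with $\pi_1p_1$ and $\pi_1p_2$ both equal $\dot{(-)}_X$, and applying $\UU$ gives a common homomorphic splitting. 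Your closing remark on non-constructivity matches the paper's own assessment.

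There is, however, a genuine error in the alternative you offer inside your first route: it is \emph{not} true that reflexive coequalizers in $\Set^{\UU}$ are created by the forgetful functor. That creation property holds for finitary monads (more generally, for monads whose endofunctor preserves reflexive coequalizers), and the ultrafilter monad is neither. Concretely, in $\Set^{\UU}\cong\mathbf{KHaus}$ take $B=[0,1]$ and let $A\subseteq B\times B$ be the union of the diagonal with $\{(1/n,1/(n+1))\mid n\geq 1\}$; this is closed, hence compact Hausdorff, and the two projections form a reflexive pair. The coequalizer in $\Set$ collapses $\{1/n\mid n\geq 1\}$ to one point while keeping $0$ separate, but every saturated open set containing $0$ contains that class, so the quotient topology is not Hausdorff; the coequalizer in $\mathbf{KHaus}$ must also identify $0$ with the class, and the underlying sets differ. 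The paper's method (ii) circumvents precisely this: it forms the coequalizer in $\Set$, equips it with the \emph{final} $\UU$-space structure, notes that the result satisfies (K) by Proposition \ref{lowhangingfruit}, and only then applies the reflection of Corollary \ref{Hausdorffreflection} to land in $\UU\text{-{\bf ASpa}}$. You should either adopt that two-step repair or delete the reflexive-coequalizer claim and rely on your other, correct routes, which already suffice to prove the theorem.
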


\begin{proof} 
The reflectivity of $\UU\text{-{\bf ASpa}}$ in $\UU$-{\bf Spa} follows trivially from its reflectivity in $\UU$-{\bf RGph}. It therefore suffices to consider a reflexive $\UU$-graph $(X,\rightsquigarrow)$ and construct its reflection into $\UU$-{\bf ASpa}, as indicated alternatively in (i) or (ii) below. 

(i) Consider a representative set of surjective homomorphisms $q_i: (\UU X,\Sigma)\to (Q_i,c_i)\;(i\in I)$ such that $q_i\dot{(-)}:X\to Q_i$ is monotone. Then let $(Q,c)$ be the image of the induced homomorphism $\UU X\to\prod_{i\i I}Q_i$ and $q:\UU X\to Q$ its restriction.

(ii) Form the coequalizer $q$ as in Lemma \ref{StoneCechLemma} (ii) --- which, however, raises the question of how to form the coequalizer of a pair of homomorphisms $g,h: Z\rightrightarrows W$ in $\UU\text{-{\bf ASpa}}$. This may again be done by Adjoint-Functor-Theorem methods: one considers a representative set of surjective homomorphisms $r_i: W\to R_i\;(i\in I)$ with $r_ig=r_ih$ and proceeds as in (i). Alternatively, one may form the coequalizer $s:W\to S$ in $\Set$ and consider $S$ as a $\UU$-space with the final structure with respect to $s$. By Proposition \ref{lowhangingfruit}, $S$ satisfies (K) and just needs to be subjected to the reflection $r:S\to R$ into $\UU\text{-{\bf ASpa}}$ of Corollary \ref{Hausdorffreflection}, to obtain the coequalizer $rs:W\to R$ of $g,h$ in $\UU\text{-{\bf ASpa}}$.
\end{proof}

\begin{defi}\label{Ucomplreg}
\rm
	Denoting its reflection into $\UU\text-{\bf ASpa}$ by $\beta_X:X\to \mathrm B X$ (with $\mathrm B$ to be read as capital Beta\footnote{We write $\mathrm BX$, rather than the standard $\beta X$, in order to be able to distinguish notationally the functor $\mathrm B$ from the natural transformation $\beta$; not doing so may cause confusion in Section 5. }), we say that a $\UU$-space $X$ satisfies condition
	\begin{itemize}
	\item[(C)] if $X$ carries the cartesian (=\; initial =\;weak) structure with respect to $\beta_X$; that is, if for all $\mathfrak x\in \UU X, y\in X$, one has	$\mathfrak x\rightsquigarrow y$ in $X$ whenever $\UU\beta_X(\mathfrak x)\rightsquigarrow \beta_Xy$ in $\mathrm BX$;
	\item[(F)] if $\beta_X$ is a monomorphism; that is: if $\beta_X$ an injective map;	
	\item[(CF)] if $X$ satisfies (C) and (F).	
	\end{itemize}
We denote the corresponding full subcategories of $\UU\text{-{\bf Spa}}$ by
\begin{center}
$\UU\text{-{\bf CSpa}}$, $\UU\text{-{\bf FSpa}}$, and $\UU\text{-{\bf CFSpa}}=\UU\text{-{\bf CSpa}}\cap \UU\text{-{\bf FSpa}}.$
\end{center}
\end{defi}

Trivially, every algebraic $\UU$-space $X$ satisfies (CF), since being algebraic means equivalently that $\beta_X$ is an isomorphism. Hence, we have the diagram 
\begin{center}
$\xymatrix{ && && \UU\text{-{\bf CSpa}}\ar[rrd] && \\
\UU\text{-\bf{ASpa}}\ar[rr] && \UU\text{-{\bf CFSpa}}\ar[rru]\ar[rrd] && && \UU\text{-{\bf Spa}} \\
&& && \UU\text{-{\bf FSpa}}\ar[rru] && \\
  }$	
\end{center}
of full embeddings. 
Property (F) implies (H) of Definition \ref{definitionUAlg} (by Proposition \ref{lowhangingfruit} (3)), but generally not conversely; also, property (C) is generally incomparable with property (K) (as will become clear in Section 3). Property (F) enjoys the same stability behaviour as (H) that was stated in Proposition \ref{lowhangingfruit}(3); property (F) is therefore stable under direct products and subobjects. This latter stability holds also for property (C); but, unlike (F) or (K), property (C) satisfies in fact a stability property stronger than product- or subobject stability, as follows:
\begin{theorem}\label{lowhangingfruitbis}
Consider (a possibly large or empty) family of monotone maps $f_i:X\to Y_i\;(i\in I)$ of $\UU$-spaces with the given common domain $X$. 
\begin{itemize}
\item[{\em(1)}] Let $X$ carry the weak (= initial) structure with respect to $(f_i)_{i\in I}$. Then, if every $Y_i$ satisfies {\em (C)}, so does $X$. 
\item[{\em(2)}] Let $(f_i)_{i\in I}$ be point-separating. Then, if every $Y_i$ satisfies {\em (F)}, so does $X$.
\item[{\em (3)}]
$\UU\text{-{\bf CSpa}}$ is simultaneously epi- and mono-reflective in $\UU\text{-{\bf Spa}}$, and $\UU\text{-{\bf FSpa}}$ is regular-epi-reflective in $\UU\text{-{\bf Spa}}$.	 
\end{itemize}

\end{theorem}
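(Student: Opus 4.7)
The plan is to exploit the naturality square $\mathrm B(f)\circ\beta_X=\beta_Y\circ f$ of the unit $\beta$ of the reflection into $\UU$-\textbf{ASpa} throughout, and to treat (1) and (2) first and then derive (3) from them by applying each to a carefully chosen singleton family. The key auxiliary observation is that every algebraic $\UU$-space trivially satisfies both (C) and (F), because $\beta$ is then an isomorphism; in particular $\mathrm BX$ belongs to $\UU$-\textbf{CSpa}\,$\cap$\,$\UU$-\textbf{FSpa} for every $X$.

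For (1), the inclusion $\mathfrak x\rightsquigarrow y \Rightarrow \UU\beta_X(\mathfrak x)\rightsquigarrow\beta_X y$ is immediate from monotonicity of $\beta_X$. For the converse, starting from $\UU\beta_X(\mathfrak x)\rightsquigarrow\beta_X y$ in $\mathrm BX$, I would apply monotonicity of $\mathrm B(f_i)$ and then naturality to reach $\UU\beta_{Y_i}(\UU f_i(\mathfrak x))\rightsquigarrow\beta_{Y_i}(f_iy)$ in $\mathrm BY_i$; property (C) of $Y_i$ descends this to $\UU f_i(\mathfrak x)\rightsquigarrow f_iy$ in $Y_i$ for every $i\in I$, and the initiality hypothesis on $X$ then yields $\mathfrak x\rightsquigarrow y$. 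For (2), if $\beta_X(x)=\beta_X(x')$, naturality gives $\beta_{Y_i}(f_ix)=\beta_{Y_i}(f_ix')$ for every $i$; injectivity of each $\beta_{Y_i}$ together with point-separation of $(f_i)_{i\in I}$ then forces $x=x'$.

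For (3), given $X\in\UU$-\textbf{Spa}, I would define the $\UU$-\textbf{CSpa}-reflection $\eta_X\colon X\to X_{\mathrm C}$ as the identity on underlying sets, where $X_{\mathrm C}$ carries the initial structure along the singleton family $\{\beta_X\colon X\to \mathrm BX\}$; part (1) places $X_{\mathrm C}$ in $\UU$-\textbf{CSpa}, and $\eta_X$ is a monotone bijection, hence simultaneously a mono and an epi in the topological category $\UU$-\textbf{Spa}. Its universal property is a one-line chase: for monotone $f\colon X\to Z$ with $Z\in\UU$-\textbf{CSpa}, $\UU\beta_X(\mathfrak x)\rightsquigarrow\beta_X y$ produces $\UU f(\mathfrak x)\rightsquigarrow fy$ via monotonicity of $\mathrm B(f)$, naturality, and property (C) of $Z$. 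For $\UU$-\textbf{FSpa}, I would take the (regular epi, mono) factorization $X\twoheadrightarrow X'\hookrightarrow \mathrm BX$ of $\beta_X$ (available because $\UU$-\textbf{Spa} is topological over $\Set$); part (2) applied to $\{X'\hookrightarrow \mathrm BX\}$ places $X'$ in $\UU$-\textbf{FSpa}, and for monotone $f\colon X\to Z$ with $Z\in\UU$-\textbf{FSpa}, naturality combined with injectivity of $\beta_Z$ implies that the kernel congruence of $X\to X'$ is finer than that of $f$, so $f$ factors set-theoretically through $X\to X'$, with automatic monotonicity guaranteed by the final-structure property of the regular epi.

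I expect the main pitfall to be notational rather than conceptual: the underlying map of $\eta_X$ is the identity while its domain and codomain carry different $\UU$-space structures, so one must keep careful track of which convergence relation is in use at each step of the chase, and remember that invoking algebraicity of $\mathrm BX$ at the single place where it is needed is precisely what turns parts (1) and (2) into the engines behind the reflection constructions in (3).
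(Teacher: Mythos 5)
Your proof is correct. Parts (1) and (2) are exactly the paper's argument: the authors also run everything through the naturality square $\beta_{Y_i}f_i=(\mathrm Bf_i)\beta_X$ and then observe that the first factor of a collectively monic (resp.\ initial) family is monic (resp.\ initial); you have merely unfolded this pointwise in terms of ultrafilters. Where you genuinely diverge is part (3). The paper derives both reflectivity claims by Adjoint-Functor-Theorem methods, deferring to the general Theorem \ref{complregrefl}: the (C)-reflection is the initial lift of the (large) family of \emph{all} monotone maps into (C)-objects, and the (F)-reflection comes from factoring the family of all maps into (F)-objects through a product, using cowell-poweredness to cut the data down to a set. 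You instead manufacture both reflections from the single morphism $\beta_X$ -- the initial lift of $\beta_X$ for $\UU$-\textbf{CSpa}, the (regular epi, mono)-factorization of $\beta_X$ for $\UU$-\textbf{FSpa} -- and verify the universal properties directly via naturality and properties (C), (F) of the target. This is legitimate and checks out (in particular, applying (1) and (2) to the singleton families $\{\beta_X\colon X_{\mathrm C}\to\mathrm BX\}$ and $\{X'\hookrightarrow\mathrm BX\}$ does place the constructed objects in the right subcategories, since algebraic $\UU$-spaces satisfy both (C) and (F)). What your route buys is explicitness: no solution sets, no products over representative families, and concrete descriptions of the reflections; it is close in spirit to the ``alternative'' two-step construction the paper only sketches for the (CF)-reflection in Corollary \ref{lowcoro}. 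What the paper's route buys is uniformity: the same closure-under-initial-families argument is recycled verbatim for arbitrary monads over an abstract base category in Section 4, whereas your construction presupposes that the reflection $\mathrm B$ into $\UU$-\textbf{ASpa} has already been built (which, of course, it has, by Theorem \ref{StoneCechTheorem}). One cosmetic caveat you already flagged yourself: since $\eta_X\colon X\to X_{\mathrm C}$ is the identity on underlying sets, every convergence statement must carry an explicit tag saying which of the two structures it refers to.
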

\begin{proof}
As the reflectivity claims in (3) follow easily (by Adjoint-Functor-Theorem methods) from the stability properties (1) and (2) (as we show more generally in Theorem \ref{complregrefl}) , we just indicate the proof of these. They, in turn, are an immediate consequence of the functoriality of $\mathrm B: \UU\text{-{\bf Spa}}\to \UU\text{-{\bf Alg}}$ and the naturality of $\beta$. Indeed, for every map $f_i$, consider the $\beta$-naturality diagrams
\begin{center}
$\xymatrix{X\ar[rr]^{f_i}\ar[d]_{\beta_X} && Y_i\ar[d]^{\beta_{Y_i}}\\
\mathrm BX\ar[rr]^{\mathrm B f_i} && \mathrm BY_i\;.\\
}$	
\end{center}
 Now, when $(f_i)_{i\in I}$ is collectively monic and every map $\beta_{Y_I}$ is monic, the composite family $\beta_{Y_i}f_i=(\mathrm Bf_i)\beta_X\; (i\in I)$ is still collectively monic  and, hence, makes $\beta_X$ monic. This shows the stability property for (F), as stated in (2). But the same argumentation may be used for (C), by trading the word ``(collectively) monic" for ``initial'' everywhere; this proves (1).
\end{proof}

\begin{rem}
\rm 
Regular epimorphisms in $\UU\text{-{\bf Spa}}$, or in any topological category over $\Set$, are precisely the quotient maps, {\em i.e.}, the surjective final maps. Simultaneously epic and monic morphisms in such categories are precisely the bijective morphisms. 
\end{rem}

\begin{cor}\label{lowcoro}
$\UU\text{-{\bf CFSpa}}$ is epi-reflective in $\UU\text{-{\bf Spa}}$.
\end{cor}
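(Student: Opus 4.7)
The plan is to construct the reflection $X \to X^{\mathrm{CF}}$ for a given $\UU$-space $X$ directly, by taking the (epi, initial-mono)-factorization of the unit $\beta_X : X \to \mathrm B X$ of the $\UU\text{-{\bf ASpa}}$-reflection. Concretely, put $I := \beta_X(X) \subseteq \mathrm B X$, equip $I$ with the initial (subspace) $\UU$-space structure inherited from $\mathrm B X$ via the inclusion $m : I \hookrightarrow \mathrm B X$, and let $\eta_X : X \to I$ be the surjective corestriction of $\beta_X$. Since $m \circ \eta_X = \beta_X$ is monotone and $I$ carries the largest structure making $m$ monotone, $\eta_X$ is automatically monotone; being surjective, it is in particular an epimorphism in $\UU\text{-{\bf Spa}}$.

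The key step is to verify that $I \in \UU\text{-{\bf CFSpa}}$. Property (C) comes for free from Theorem \ref{lowhangingfruitbis}(1): the algebraic space $\mathrm B X$ trivially satisfies (C) (its own reflection unit being an isomorphism), and the single initial monomorphism $m$ transports (C) to $I$. For property (F), I would invoke the universal property of the $\UU\text{-{\bf ASpa}}$-reflection: because $m : I \to \mathrm B X$ is a monotone map into an algebraic space, it factors uniquely as $m = h \circ \beta_I$ for an algebra homomorphism $h : \mathrm B I \to \mathrm B X$, and since $m$ is injective, so is $\beta_I$. Thus $I$ lies in both $\UU\text{-{\bf CSpa}}$ and $\UU\text{-{\bf FSpa}}$.

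It remains to check the universal property. Given a monotone $f : X \to Z$ with $Z \in \UU\text{-{\bf CFSpa}}$, I would define $\bar f : I \to Z$ on elements by $\bar f(\beta_X x) := f(x)$; well-definedness is forced by property (F) of $Z$ together with naturality of $\beta$, since $\beta_X x_1 = \beta_X x_2$ yields $\beta_Z f x_1 = \mathrm B f(\beta_X x_1) = \mathrm B f(\beta_X x_2) = \beta_Z f x_2$ and then $f x_1 = f x_2$ by injectivity of $\beta_Z$. The monotonicity of $\bar f$ is where property (C) of $Z$ enters: $\beta_Z \circ \bar f = \mathrm B f \circ m$ is monotone as a composite of monotone maps, and because $Z$ bears the initial structure along $\beta_Z$, this suffices to conclude that $\bar f$ itself is monotone. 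Uniqueness follows from the surjectivity of $\eta_X$, completing the reflection. I expect the main obstacle to be precisely this monotonicity check for $\bar f$, since it uses the full strength of (C) on $Z$ --- one genuinely has to pull monotonicity back along $\beta_Z$ --- whereas all the other steps are formal consequences of the image factorization and of the naturality of $\beta$.
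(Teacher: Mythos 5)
Your proof is correct, but it takes a genuinely different route from the paper's. The paper offers two arguments: an Adjoint-Functor-Theorem argument from the closure of $\UU\text{-{\bf CFSpa}}$ under point-separating initial families (Theorem \ref{lowhangingfruitbis}(1),(2)), and, alternatively, the composite of the two reflections of Theorem \ref{lowhangingfruitbis}(3) --- first into $\UU\text{-{\bf FSpa}}$ (a quotient map), then into $\UU\text{-{\bf CSpa}}$ (a bijection). You instead build the reflection in one explicit step, as the (surjection, initial injection)-factorization of $\beta_X$ through its image $I=\beta_X(X)\subseteq\mathrm BX$; your verifications that $I$ satisfies (C) and (F) and that $\bar f$ is monotone all check out, the last one being exactly the cartesian lifting property of the initial structure on $Z$ with respect to $\beta_Z$, i.e., the full strength of (C), as you say. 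What your construction buys is an explicit description of the reflection that recovers the classical picture of the Tychonoff reflection as the image of $X$ in its $\check{\mathrm C}$ech-Stone compactification with the subspace structure (cf. Confession \ref{confession1} and Example \ref{examples}(5)); what the paper's arguments buy is that they transfer verbatim to the general setting of Section 4 (Theorem \ref{complregrefl}(5)), where underlying maps, surjectivity and set-theoretic images are unavailable and one must argue instead with (regular epi, mono)-factorizations and well-poweredness. Two small points you should make explicit: that $\eta_X$ is an epimorphism follows from its surjectivity together with the faithfulness of the underlying-set functor, and that the initial structure on $I$ is again reflexive and transitive is Facts \ref{USpatopological}(1).
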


\begin{proof}
By the Theorem, $\UU\text{-{\bf CFSpa}}$ is closed in $\UU\text{-{\bf Spa}}$ under point-separating initial families and, hence, epi-reflective, by the Adjoint Functor Theorem. Alternatively, with the reflections mentioned in Theorem \ref{lowhangingfruitbis} already at hand, first reflect a given $\UU$-space into $\UU\text{-{\bf FSpa}}$ and then the resulting space into $\UU\text{-{\bf CSpa}}$, to obtain a space that actually lies in $\UU\text{-{\bf CFSpa}}$. The composite of the two reflection morphisms is still surjective.
\end{proof}
We denote the $\UU\text{-{\bf CFSpa}}$-reflection of a $\UU$-space $X$ by $\alpha_X:X\to \mathrm A X$ (with $\mathrm A $ to be read as capital Alpha). 

Our primer on axiomatic ultrafilter convergence ends here, as it suffices for the purposes of this paper. Further topological properties in a convergence setting were considered in the thesis \cite{Kamnitzer1974} and, more comprehensively, in the thesis \cite{Mobus1981}.

\section{Confessions}
With apologies to the patient reader, we should make some belated confessions, the first of which is badly overdue: 
\begin{confession}\label{confession1}
Essentially all assertions of Section {\em 2} are well known, since 
\begin{itemize}
\item $\UU\text{-{\bf RGph}}\cong\mathbf{PsTop}$ is Choquet's {\em \cite {Choquet1948}} category of pseudotopological spaces and their continuous maps (identified by Wyler {\em \cite{Wyler1976}} as the quasitopos hull of the category {\bf Top} of all topological spaces);
	\item $\UU\text{-{\bf ASpa}}\cong\Set^{\UU}$ is (isomorphic to) the category $\mathbf{KHaus}$ of compact Hausdorff spaces and their continuous maps (as first established by Manes {\em \cite{Manes1967}, \cite{Manes1969}});
	\item $\UU\text{-{\bf Spa}}$ is (isomorphic to) {\bf Top} (as first established by Barr {\em \cite{Barr1970}}); consequently,
	\item $\UU\text{-{\bf KSpa}}$	and $\UU\text{-{\bf HSpa}}$ are respectively the full subcategories {\bf KTop} and {\bf Haus} of compact and of Hausdorff topological spaces; and	
	\item $\UU\text{-{\bf CSpa}}$	, $\UU\text{-{\bf FSpa}}$, and $\UU\text{-{\bf CFSpa}}$ are respectively the full subcategories {\bf CReg}, {\bf FHaus}, and {\bf Tych} of completely regular, of functionally Hausdorff (see below), and of Tychonoff spaces.	
	\item The reflections $\beta$ and $\alpha$ take on their expected topological meaning of $\check{C}$\!ech-Stone reflection and of Tychonoff reflection for all (pseudo)topological spaces (as first described in categorical generality by Herrlich {\em \cite{Herrlich1968}}).
	\end{itemize}	
\end{confession}
 
As a reminder: a topological space $X$ is {\em functionally Hausdorff}	 if, for any pair of distinct points $x,y$, there is a continuous map $f:X\to [0,1]$ with $fx=0$ and $fy=1$. We note that even a regular Hausdorff space may fail to be functionally Hausdorff: see, for example, \cite{Willard1968}, Exercises 14G, 18G.
\medskip

The careful reader of Section 2 will have also  observed that, once we had established (right at the beginning of the section) that the ultrafilter functor $\UU$ carries a monad structure, we actually never used  the definition of $\UU$ again; we just used the existence of its monad structure. Therefore:
\begin{confession}\label{confession2}
	Every statement of Section {\em 2} remains true if one trades the monad $\UU$ for any monad $T$ of the category $\Set$ (and replaces the word ``ultrafilter'' by a neutral term, like ``element''); that is: if $\UU$ is replaced by any endofunctor $T$ of $\Set$ that comes equipped with natural transformations $\eta:\mathrm{Id}_{\Set}\to T$ and $\mu:TT\to T$ (replacing $\dot{\text{\em (-)}}$ and $\Sigma$, respectively), satisfying the commutativity conditions
	\begin{center}
$\xymatrix{T\ar[rr]^{T\eta}\ar[d]_{\eta T}\ar[rrd]^{1_T} && TT\ar[d]^{\mu} &&& TTT\ar[rr]^{T\mu}\ar[d]_{\mu T} && UU\ar[d]^{\mu}\\
TT\ar[rr]_{\mu} && T &&& TT\ar[rr]_{\mu} && T\\
}$	
\end{center}
This fact will be no surprise to readers familiar with Barr's relational algebras of {\em \cite{Barr1970}}, or with the more general $(T,V)$-categories (or -spaces) studied in {\em \cite{CT2003},\cite{CHT2004},\cite{MonTop}}.
\end{confession}
In what follows, we use the $T$-for-$\UU$ exchange also in our terminology and notation, so that in particular we have the category $T$-{\bf Spa} of $T$-{\em spaces} with their monotone maps, and{ its full subcategories  $T$-{\bf XSpa}, for {\bf X} = {\bf A}, {\bf K}, {\bf H}, {\bf C}, {\bf F}, or {\bf CF}.

\begin{exas}\label{examples}
\rm 
(1) There are only two (isomorphism types of) $\Set$-monads $T$ for which the unit transformation  $\mathrm{Id}_{\Set}\to T$ fails to be monic; these are the endofunctors $\TT_0, \TT_1$ with $\TT_0X=\TT_1X=1$ (= a fixed singleton set) for every non-empty set $X$, and with $\TT_0\emptyset=\emptyset,\, \TT_1\emptyset=1$. For both of these monads $T$, $T\text{-{\bf Spa}}=T\text{-{\bf RGph}}$ is (isomorphic) to $\Set$. For $T=\TT_0$, every set $X$ satisfies (K) and (C), but each of (H), (F), (CF), (A) is synonymous with $|X|\leq 1$. For $T=\TT_1$, every set $X$ satisfies (C), while (K) means $X\neq\emptyset$; satisfaction of any of (H), (F), (T) means $|X|\leq 1$ again, so that (A) is satisfied precisely when $|X|=1$.

\medskip

(2)	For $T={\rm Id}_{\Set}$, considered as a monad with identical transformations, $T\text{-{\bf Spa}}$ is the category $\mathbf{Ord}$ of preordered sets and their monotone maps. Every preordered set satisfies condition (K), but it will satisfy any of (H), (F), (CF), (A) only if it is discrete. A preorder satisfies (C) precisely when it is an equivalence relation. The reflector $\beta=\alpha$ assigns to a preordered set $X$ the (discretely ordered) set $\pi_0X$ of its connected components. $T\text{-{\bf RGph}}$ is the quasitopos of sets equipped with a reflexive relation, called {\bf Rere} in \cite{AHS}.

\medskip

(3) Consider for $T$ the power-set functor $\PP:\Set\to\Set$, with its monad structure given by
$X\to\PP X,\, x\mapsto\{x\}$, and $\bigcup:\PP\PP X\to\PP X$, for every set $X$. The defining axioms (R) and (T) of a $\PP$-space $(X,\rightsquigarrow)$ may be stated as
$$\text{(R)}\;\{x\}\rightsquigarrow x\quad\text{and}\quad\text{(T)}\; (A_i\rightsquigarrow y_i\;(i\in I)\;\text{and}\;\{y_i\mid i\in I\}\rightsquigarrow z)\Longrightarrow \bigcup_{i\in I}A_i\rightsquigarrow z,$$
for all $x, z, y_i\in X, A_i\subseteq X\,(i\in I)$; a monotone map $f:X\to Y$ satisfies $(A\rightsquigarrow y\Longrightarrow f(A)\rightsquigarrow fy)$. Reading $A\rightsquigarrow y$ as ``$A$ supports $y$'' (and perhaps even writing $A\leq y$ instead of $A\rightsquigarrow y$), one may think of a $\PP$-space as a {\em pluri-ordered set} and denote the resulting category by {\bf PluOrd}. In fact, there are multiple ways of regarding preordered sets as pluri-ordered. Here we concentrate on the well-known fact
that the category $\Set^{\PP}$ of $\PP$-algebras is (isomorphic to) the category {\bf Sup} of complete lattices and their suprema-preserving maps. One has the chain of full embeddings
$$\mathbf{Sup}\longrightarrow\mathbf{Pos}_{\mathrm{sup}}\longrightarrow\mathbf{Ord}_{\mathrm{sup}}\longrightarrow \mathbf{PluOrd},$$
where the objects of $\mathbf{Ord}_{\mathrm{sup}}$ are preordered sets, and their morphisms are maps that preserve all existing suprema. (Note that here a supremum is determined only up to order equivalence; that is, up to the relation  $(x\simeq y\iff x\leq y \text{ and } y\leq x)$.) $\mathbf{Ord}_{\mathrm{sup}}$
 contains $\mathbf{Pos}_{\mathrm{sup}}$ (partially ordered sets, with the same morphisms), and in both categories $A\rightsquigarrow y$ means that $\mathrm{sup} A$ exists and is isomorphic (or equal) to $y$. 
 
Let us give evidence that the above chain is isomorphic to
  $$\PP\text{-}\mathbf{ASpa}  \longrightarrow\PP\text{-}\mathbf{CFSpa}\longrightarrow\PP\text{-}\mathbf{CSpa}  \longrightarrow \mathcal{P}\text{-}\mathbf{Spa},$$
 Such evidence may be needed since, without reference to the relevant morphisms, already the paper \cite{Perry1976} claimed that $\PP$-spaces satisfying condition (C) are described as preordered sets. But the author may have based the claim in part on a faulty argument given in the paper \cite{Manes1974}, which assumes that $(X,\rightsquigarrow)\in  \PP\text{-}\mathbf{HSpa}$ may be ordered by  $(x\leq y\iff \{x,y\}\rightsquigarrow y)$; there is an obvious three-point counter-example, disproving the transitivity of this relation in general. 

After the $\PP$-for-$\UU$ exchange, Lemma \ref{StoneCechLemma} suggests that, given a $\PP$-space $(X,\rightsquigarrow)$, one should first close the relation $\rightsquigarrow$ under arbitrary  unions, to obtain the relation $\breve{\rightsquigarrow}$ on $\PP X$, defined by
$$A\breve{\rightsquigarrow}B\iff\exists  A_i\subseteq X, y_i\in X\, (i\in I): A=\bigcup_{i\in I}A_i, B=\{y_i\mid i\in I\}, A_i\rightsquigarrow y_i\, (i\in I).$$
The least equivalence relation $\sim$ on $\PP X$ containing $\breve{\rightsquigarrow}$ is still closed under arbitrary unions and is therefore a congruence relation on the free $\PP$-algebra $\PP X$. This allows us to define the $\PP$-algebra structure $c$ on $Q=\PP X/\!\sim$ with its projection $q:\PP X\to Q$ by
$$c(\{q(A_i)\mid i\in I\})=q(\bigcup_{i\in I}A_i).$$
Then, as expected, the map $X\to Q,\, x\mapsto q(\{x\})$, serves as the reflection $\beta_X$ of $(X,\rightsquigarrow)$ into $\mathbf{Sup}$, with $c$ assuming the role of $\bigvee$ in $Q$; the order in $Q$ is therefore described by ($q(A)\leq q(B)\iff q(A\cup B)=q(B)$) for all $A,B\subseteq X$, and we have
$$q(A)=\bigvee_{x\in A}q(\{x\}).$$
Let us now establish the isomorphism $\PP$-{\bf CSpa} $\lra$ {\bf Ord}$_{\mathrm{sup}}$. For a $\UU$-space $(X,\rightsquigarrow)$ to satisfy (C) means
$$A\rightsquigarrow y\iff \beta_X(A)=\beta_X(y)\iff q(A)=q(\{y\}),\quad(*)$$
for all $A\subseteq X,\,y\in X$. This allows us to define a preorder on $X$ by $(x\leq y\iff \beta_X(x)\leq \beta_X(y)),$ with respect to which one may describe the given convergence relation by
$$A\rightsquigarrow y\iff \sup A \text { exists in } X \text{ and } \sup A\simeq y.\;\qquad(**)$$
Conversely, given $(X,\leq)\in$ {\bf Ord}$_{\mathrm{sup}}$, we use $(**)$ to define the relation $\rightsquigarrow$ and obtain the $\UU$-space $(X,\rightsquigarrow)$. The task is now to show that $(*)$ holds, the non-trivial part of which is showing that $q(A)=q(\{y\})$ implies that $\sup A\simeq y$ holds in $(X,\leq)$. But this requires only an analysis of how the relation $\sim$ has been built, and the repeated application of the transitivity property: whenever $B=\bigcup_{i\in I}C_i$, with $\sup C _i$ existing in the preordered set $(X,\leq)$ for every $i\in I$, then $\sup B$ exists if, and only if, $\sup\{\sup C_i\mid i\in I\}$ exists, and the two suprema are order-equivalent.

We therefore have a bijective correspondence for the objects, which routinely extends to morphisms and therefore gives the desired isomorphism $\PP$-{\bf CSpa} $\lra$ {\bf Ord}$_{\mathrm{sup}}$. Since $(x\simeq y\iff \beta_X(x)=\beta_X(y))$ holds for all elements $x,y$ in a $T$-space $X$ satisfying (C), this isomorphism restricts to an isomorphism $\PP$-{\bf CFSpa} $\lra$ {\bf Pos}$_{\mathrm{sup}}$.

\medskip

(4) The focus in (3) is on the order-theoretic relevance of $\PP$-spaces, {\em i.e.}, on those $\PP$-spaces in which $A\rightsquigarrow y$ means being able to take a supremum of $A$ and obtaining $y$. In particular, up to order  equivalence, at most one element $y$ may be related to a given subset $A$. But there is a topological aspect of $\PP$-spaces that emerges when, in general, one forces potentially many elements $y$ to be related to $A$. More precisely, let the $\PP$-space $(X,\rightsquigarrow)$ satisfy the property
\begin{center}(Clo) \qquad $(A\rightsquigarrow y$\quad and\quad$A\subseteq B\subseteq X)\quad \Longrightarrow\quad B\rightsquigarrow y$  
\end{center}
and define the map $c:\PP X\to\PP X$\; by $cA=\{y\in X\mid A\rightsquigarrow y\})$. Then conditions (R) and (T) give us
$$ A\subseteq cA\quad\text{ and }\quad (B\subseteq cA\Longrightarrow cB\subseteq cA),$$
which characterize $c$ equivalently as an extensive, monotone and idempotent function, {\em i.e.}, as a closure operation on $X$, making $(X,c)$ a {\em closure space}. Conversely, having a closure operation $c$ on a set $X$  and setting $(A\rightsquigarrow y\iff y\in cA)$ makes $X$ a $\PP$-space satisfying (Clo). Under the emerging bijective correspondence, monotone maps $f:X\to Y$ of $\PP$-spaces correspond to continuous (= closure-preserving) maps: $f(c_XA)\subseteq c_Y(f(A))$. Hence, we may consider the category {\bf Clo} of closure spaces as fully embedded into $\PP$-{\bf Spa}. Of course, since a topology on $X$ may be described by a finitely additive closure operation on $X$ (so that also $c(A\cup B)=cA\cup cB,\; c\emptyset = \emptyset$ hold), the category {\bf Top} is fully (and coreflectively) embedded into {\bf Clo}; for a substantial generalization of these facts, see \cite{LaiTholen2017}.

 With Confession \ref{confession1}, one therefore has another chain of full embeddings
$$\UU\text{-}\mathbf{ASpa}  \longrightarrow\UU\text{-}\mathbf{Spa}\longrightarrow\mathbf{Clo}  \longrightarrow \mathcal{P}\text{-}\mathbf{Spa},$$ 
which seems to be noteworthy since it gives a direct way of moving from a $\UU$-space (convergence) structure $\to$ on $X$ to a $\PP$-space  (closure) structure $\rightsquigarrow$ on $X$ satisfying (Clo):
$$ A\rightsquigarrow y\iff\exists \,\mathfrak x\in \UU X:A\in  \mathfrak x\;\text{ and }\mathfrak x\to y.$$
Indeed, this equivalence means that $\rightsquigarrow$ is conveniently described as the relational composite of the membership relation followed by $\to$:
\begin{center}
$\xymatrix{\PP X\ar[rr]_{\in}\ar@/^0.8pc/[rrrr]^{\rightsquigarrow} && \UU X\ar[rr]_{\to} && X\\
}$	
\end{center}

\medskip
(5) The example $T=\UU$ has already been discussed in Section 2. There, however, we never took advantage of the unit interval $I$ which, thanks to Urysohn's Lemma, serves as a regular cogenerator of {\bf KHaus} (and of {\bf  Tych}). In fact, as already shown in \cite{Herrlich1968}, from the (dense, closed embedding)-factorization of the natural map $\theta_X$ of a topological space $X$ into the Tychonoff cube $I^{\Top(X,I)}$, one obtains rather easily the $\check{\mathrm C}$ech-Stone compactification $\beta_X:X\to \mathrm BX$---as a codomain restriction of $\theta_X$, since the diagonalization property of the factorization system lets any continuous map $f:X\to K$ into a compact Hausdorff space $K$ factor uniquely through $\beta_X$:
\begin{center}
	$\xymatrix{X\ar[rrrr]^{\theta_X}\ar[rrd]^{\beta_X}\ar[dd]_f &&&& I^{\mathbf{Top}(X,I)}\ar[dd]^{I^{\mathbf{Top}(f,I)}}\\
	&& \mathrm BX=\overline{\theta_X(X)}\ar@{>->}[rru]\ar@{-->}[lld]_{f^{\sharp}} &&\\
	K\;\ar@{>->}[rrrr]^{\theta_K} && && I^{\mathbf{Top}(K,I)}\\
	}$
\end{center}

\begin{rems}
\rm 
(1)	The membership relation mentioned in Example \ref{examples}(4)  is (the $X$-component of) a {\em} lax natural transformation $\in:\hat{\PP}\to\hat{\UU}$ of the lax extensions of the monads $\PP$ and $\UU$, from $\Set$ to the category {\bf Rel} of sets and relations (see Remarks \ref{laxUalgebras}(2)), in the sense that, for all $R\subseteq X\times Y$, we have the laxly commuting diagrams
	\begin{center}
	$\xymatrix{\PP X\ar[d]_{\in_X}^{\quad\;\;\subseteq}\ar[r]^{\hat{\PP}R} & \PP Y\ar[d]^{\in_Y} & X\ar[d]_{\{-\}}^{\;=}\ar[rd]^{\dot{(-)}} & & \PP\PP X\ar[r]^{\hat{\PP}\in_X}\ar[d]_{\bigcup} & \PP\UU X\ar[r]^{\in_{\UU X}} & \UU\UU X\ar[d]^{\Sigma}_{\subseteq\qquad\qquad\;\;}  \\
	  \UU X\ar[r]_{\hat{\UU}R} & \UU Y  & \PP X\ar[r]_{\in_X} & \UU X & \PP X\ar[rr]_{\in_X} & & \UU X\\
	     }$	
	\end{center}
	(2) The paper \cite{Manes20210} offers a rich supply of further topologically interesting monads.
\end{rems}

\end{exas}

\section{$T$-spaces in a category}

We now show that Confession \ref{confession2} may actually be extended from the category $\Set$ to a fairly arbitrary category $\CC$ that comes equipped with a monad $T=(T,\eta,\mu)$ on $\CC$. Concretely, let us assume that
\begin{itemize}
\item $\CC$ is a complete and well-powered category (so that it admits all small-indexed limits and that each of its objects has a representative {\em set} of subobjects);
\item every morphism in $\CC$ factors into a regular epimorphism followed by a monomorphism (epimorphisms are regular when they appear as coequalizers);
\item the endofunctor $T:\CC\to \CC$ of the given monad maps regular epimorphisms to regular epimorphisms.
\end{itemize}
Note that these conditions are certainly satisfied for $\CC=\Set$, including the last one: with Choice, surjections are retractions and therefore preserved by any (endo)functor. In what follows, not all of these conditions will be needed right away; in particular, the third one is not used before Theorem \ref{AlgreflinSpa}. The conditions are chosen to make sure that the entire program of Section 2 carries over easily to the current setting.

\begin{defi}\label{TspaceinCdef}
\rm 
A $T$-{\em space} $X=(X,A,d,c)$ in $\CC$ is given by $\CC$-objects $X$ (to be thought of as the {\em object of points} of the $\T$-space) and  $A$ (the {\em object of wiggly arrows} of the $T$-space) and morphisms $d: A\to TX$ and $c: A\to X$ (to be thought of as assigning to a wiggly arrow its {\em domain} and {\em codomain}, respectively), such that
the pair $(d,c)$ is jointly monic; equivalently, the induced morphism $\langle d,c\rangle :A\to TX\times X$ is monic (and thus represents the {\em convergence relation} of the $T$-space); furthermore,
\begin{itemize}
\item[(R)] there is a morphism $i:X\to A$ with $d\cdot i=\eta_X$ and $c\cdot i=1_X$ (called the {\em insertion of wiggly identity arrows});
\item[(T)] there is a morphism $m: TA\times_{TX}A\to A$ with $d\cdot m=\mu_X\cdot Td\cdot d'$ and $c\cdot m=c\cdot c'$ (called the multiplication or {\em composition of wiggly arrows});
\begin{center}
	$\xymatrix{&& X\ar[lld]_{\eta_X}\ar[d]^i\ar[rrd]^{1_X} &&\\
	TX && A\ar[ll]_d\ar[rr]^c && X\\
	TA\ar[u]^{\mu_X\cdot Td}\ar[rrd]_{Tc} && TA\times_{TX}A\ar[ll]_{d'}\ar[u]_m\ar[rr]^{c'} && A\ar[u]_c\ar[lld]^d\\
	&& TX &&\\
	}$
\end{center}
here, the pullback object $TA\times_{TX}A$ of the pair $(Tc,d)$ with its projections $d',c'$ should be thought of as the object of {\em composable pairs of wiggly arrows} of the $T$-space.
\end{itemize}
\end{defi}

\begin{rem}
\rm 
 Note that the morphisms $i$ and $m$ are uniquely determined since the pair $(d,c)$ is monic. If one removes the monic condition, one arrives at the notion of (small internal) $T$-{\em category} in $\CC$, by considering the morphisms $i$ and $m$ as parts of the structure, and by imposing the expected identity and associativity conditions on them---which, however, may be considered to be a bit cumbersome in general. But in our setting they come for free. For details, we refer to Burroni's original paper \cite{Burroni1971} and the more recent paper \cite{TholenYeganeh2021}.
\end{rem}

\begin{defi}
\rm For $T$-spaces $X=(X,A,d_X,c_X),\; Y=(Y,B,d_Y,c_Y)$, a morphism $f:X\to Y$ in $\CC$ is called {\em monotone}, if there is a morphism $\overline{f}: A\to B$ in $\CC$ with $d_Y\cdot \overline{f}=Tf\cdot d_X$ and $c_Y\cdot \overline{f}=f\cdot c_X$ (which makes $\overline{f}$ determined by $f$).
\begin{center}
$\xymatrix{TX\ar[d]_{Tf} && A\ar[ll]_{d_X}\ar[rr]^{c_X}\ar[d]^{\overline{f}} && X\ar[d]^f\\
TY && B\ar[ll]^{d_Y}\ar[rr]_{c_Y} && Y\\
}$	
\end{center}
The $T$-spaces in $\CC$ together with the monotone morphisms form the category
\begin{center}
	$T$-{\bf Spa}$(\CC)$.
\end{center}
\end{defi}

\begin{rems}
\rm 
(1) In case $\CC=\Set$, up to categorical equivalence $T$-{\bf Spa}$(\CC)$ is the category $T$-{\bf Spa} of Section 3. Indeed, given $(X,A,d,c)\in T$-{\bf Spa}$(\Set)$, let
$$C=\langle d,c\rangle(A)\subseteq TX \times X.$$
\begin{center}
$\xymatrix{A\;\ar@{>->}[rd]^{\langle d,c\rangle}\ar[dd]_{\cong} & && TA\ar[dd]_{\cong}\ar@{->>}[dr]\ar[drr]^{\langle Td,Tc\rangle} & & \\
& TX\times X && & \hat{C}\;\ar@{>->}[r] & TTX\times TX\\
C\;\ar@{>->}[ru]_{\langle\pi_1,\pi_2\rangle} & && TC\ar@{->>}[ru]\ar[rru]_{\langle T\pi_1,T\pi_2\rangle} & &\\
}$	
\end{center}
Then $\hat{C}=\langle T\pi_1,T\pi_2\rangle(TC)=\langle Td,Tc\rangle(TA)\subseteq TTX\times TX$, and conditions (R),
(T) as stated in this section are easily seen to translate to conditions (R),
(T) as stated in Sections 2 and 3. Conversely, given $(X,C)$ in $T$-{\bf Spa}, one has $(X,C,\pi_1,\pi_2)$ in $T$-{\bf Spa}$(\Set)$. Obviously, these object assignments preserve the monotonicity of maps.

(2) It is important to note that the above diagrams make sense {\em without } the provision $\CC=\Set$. Indeed, taking advantage of the (regular epi, mono)-factorization system in our general category $\CC$, we can define the relation $\hat{C}$ as in the $\Set$-case. This means that the relational description given for $\CC=\Set$ translates to the general case. In particular, if $\CC$ is a variety as in universal algebra, like that of groups, rings, $R$-modules, {\em etc.}, we may describe the objects of $T$-{\bf Spa}$(\CC)$ in terms of congruence relations, by just following the lead of the relational description in the $\Set$-case.

(3) For $T$ the identity monad on the category $\CC$, $T$-spaces in $\CC$ are {\em internal preorders} of $\CC$. Therefore, in the papers \cite{Burroni1971}, \cite{TholenYeganeh2021}, also for a general monad $T$ the category $T$-{\bf Spa}$(\CC)$ is denoted by {\bf Ord}$(T)$.
\end{rems}

Let	us first confirm that Facts \ref{USpatopological}(1),(2) remain valid in the current categorical context, as already claimed in \cite{TholenYeganeh2021}. Since some readers may find it difficult to extract these facts from the categorical setting of that paper, we sketch their easy proofs here.
\begin{pro}\label{SpaTtopological}
The forgetful functor 	
 $T$-{\bf Spa}$(\CC)\lra\CC, \;(X,A,d,c)\longmapsto X,$ is topological. It therefore admits both, a left adjoint and a right adjoint. Like $\CC$, $T$-{\bf Spa}$(\CC)$ is complete and well-powered and  has a (regular epi, mono)-factorization system, preserved by the forgetful functor. If $\CC$ is also cocomplete, so is $T$-{\bf Spa}$(\CC) $.
 \end{pro}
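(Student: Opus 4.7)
The plan is to reduce all assertions to the topologicity of the forgetful functor $U\colon T\text{-}\mathbf{Spa}(\CC)\to\CC$. Faithfulness of $U$ is immediate, since the accompanying $\overline{f}$ of a monotone morphism is uniquely determined by the jointly monic pair $(d_Y,c_Y)$.

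To construct an initial lift of a $U$-structured source $(f_i\colon X\to (Y_i,B_i,d_i,c_i))_{i\in I}$, I pull back for each index $i$ the monomorphism $\langle d_i,c_i\rangle\colon B_i\hookrightarrow TY_i\times Y_i$ along $Tf_i\times f_i$, obtaining a subobject $E_i\hookrightarrow TX\times X$ together with a canonical morphism $\overline{f_i}\colon E_i\to B_i$. By well-poweredness of $\CC$, the distinct $E_i$'s form a small family in the subobject lattice of $TX\times X$, so their intersection $A\hookrightarrow TX\times X$ exists by completeness of $\CC$; with $d,c$ given by the projections, this gives a candidate $T$-space structure on $X$. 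Condition $(R)$ holds because $(Tf_i\times f_i)\cdot\langle\eta_X,1_X\rangle=\langle d_i,c_i\rangle\cdot i_{Y_i}\cdot f_i$ factors through each $B_i$, so $\langle\eta_X,1_X\rangle$ factors through every $E_i$ and hence through $A$. For $(T)$, the naturality of $\mu$ together with the universal property of the pullback $TB_i\times_{TY_i}B_i$ supplies an induced morphism $\widehat{f_i}\colon TA\times_{TX}A\to TB_i\times_{TY_i}B_i$, whence post-composing with the composition $m_i$ of $Y_i$ shows that $\langle\mu_X\cdot Td\cdot d',\,c\cdot c'\rangle$ factors through each $E_i$, hence through $A$.

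For initiality, a $T$-space $(Z,D,d_Z,c_Z)$ together with a $\CC$-morphism $g\colon Z\to X$ making every composite $f_ig$ monotone produces accompanying maps $\overline{f_ig}\colon D\to B_i$, and $\langle Tg\cdot d_Z,\,g\cdot c_Z\rangle\colon D\to TX\times X$ factors through each $E_i$ and therefore through $A$, yielding the required lift $\overline g\colon D\to A$. Once topologicity is in hand, the remaining assertions follow from standard facts about topological functors: both adjoints to $U$ arise as the initial resp.\ final lifts of empty sources resp.\ sinks (the discrete and indiscrete structures); limits and colimits are formed in $\CC$ and equipped with the initial resp.\ final structure relative to the projections resp.\ injections; the $(\text{regular epi},\text{mono})$-factorization of $\CC$ lifts by endowing the intermediate object with the final structure along the regular-epi part, automatically keeping the mono part a monomorphism in $T\text{-}\mathbf{Spa}(\CC)$; and well-poweredness holds because subobjects of $(X,A)$ are classified by a subobject $X'\hookrightarrow X$ in $\CC$ together with a subobject of $TX'\times X'$ satisfying $(R)$ and $(T)$, both ranging over sets.

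The main technical obstacle is verifying $(T)$ for the intersection $A$, since it requires applying $T$ to a pullback that itself depends on $A$ and threading the monad multiplication through several commuting squares; once the auxiliary $\widehat{f_i}$ is introduced, however, the argument reduces mechanically to the corresponding property for each $Y_i$. It is worth noting that the hypothesis that $T$ preserves regular epimorphisms is not invoked at this stage---it only becomes essential for the subsequent reflectivity results that build on this proposition.
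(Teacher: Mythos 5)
Your proposal is correct and follows essentially the same route as the paper: pull back each $\langle d_i,c_i\rangle$ along $Tf_i\times f_i$, use well-poweredness to cut the resulting monomorphisms into $TX\times X$ down to a representative set, form their intersection, and deduce the remaining assertions from standard facts about topological functors. You merely write out the verifications of (R), (T) and the lifting property that the paper dismisses as routine, and your closing observation that preservation of regular epimorphisms by $T$ is not yet needed matches the paper's own remark.
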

 
 \begin{proof}
 We consider a $\CC$-object $X$ and a possibly large family $f_i:X\to Y_i\;(i\in I)$ of morphisms 
 whose codomain carries a $T$-space structure: $Y_i=(Y_i,B_i,d_i,c_i)$; we must find the initial $T$-structure $(X,A,d,c)$, making all morphism $f_i$ monotone. This may be done by first forming the pullback $r_i:A_i\to TX\times X$ of $\langle d_i,c_i\rangle:B_i\to TY_i\times Y_i$ along the morphism $Tf_i\times f_i$, for every $i\in I$. As a pullback of a monomorphism, every $r_i$ is monic. Since $\CC$ is well-powered, we can choose a representative set of non-isomorphic subobjects of $TX\times X$ amongst the morphisms $r_i\;(i\in I)$. Hence, without loss of generality, we may assume that $I$ is small, {\em i.e.}, a set, and we can then form the intersection $r:A\to TX\times X$ (= wide pullback) of that family; it comes with monomorphisms $t_i:A\to  A_i$. Considering the components $d,c$ of $r$ and setting $\overline{f_i} = f_i'\cdot t_i$ (with $f_i': A_i\to B_i$ a pullback projection), one routinely proves that $(X,A,d,c)$ has the required lifting property.
 \begin{center}
 	$\xymatrix{A\ar[dd]_{\overline{f_i}}\ar[rd]^{t_i}\ar[rr]^{r=\langle d,c\rangle} && TX\times X\ar[dd]^{Tf_i\times f_i}\\
 	& A_i\ar[ru]^{r_i}\ar[dl]_{f_i'} & \\
 	B_i\ar[rr]^{\langle d_i,c_i\rangle} && TY_i\times Y_i
 	}$
 \end{center}
This confirms the topologicity of the forgetful functor, and the other assertions are generally valid consequences of this fact (see \cite{AHS},\cite{MonTop}).
 \end{proof}

\begin{pro}\label{AlgfullyinSpa}
The functor $\CC^T\longrightarrow T$-{\bf Spa}$(\CC),\; (X,c)\longmapsto	 (X,TX,1_{TX},c)$, embeds the category of $T$-algebras fully into $T$-{\bf Spa}$(\CC)$ and renders it equivalent to the full subcategory $T$-{\bf ASpa}$(\CC)$, formed by those $T$-spaces $(X,A,d,c)$ for which
\begin{itemize}
\item[{\em (A)}] $d:A\to TX$ is an isomorphism in $\CC$.	
\end{itemize}
\end{pro}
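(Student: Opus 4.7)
My plan is to check, in order, that the functor $F$ sending $(X,c)\longmapsto(X,TX,1_{TX},c)$ is well-defined on objects and morphisms, that it is fully faithful, and that its essential image consists precisely of those $T$-spaces satisfying (A); equivalence with $T$-{\bf ASpa}$(\CC)$ then follows.

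First, for a $T$-algebra $(X,c)$, the pair $(1_{TX},c)$ is trivially jointly monic. Taking $i:=\eta_X$ makes axiom (R) reduce to the unit law $c\cdot\eta_X=1_X$. For axiom (T), the pullback of $Tc$ along $1_{TX}$ is just $TTX$, with $d'=1_{TTX}$ and $c'=Tc$; then $m:=\mu_X$ satisfies the two required equations, the first trivially and the second being exactly the associativity law $c\cdot\mu_X=c\cdot Tc$. For a $T$-algebra homomorphism $f$, I would take $\overline{f}:=Tf$ as witness; the two monotonicity equations reduce respectively to a tautology and to the homomorphism axiom $c'\cdot Tf=f\cdot c$. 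Faithfulness of $F$ is immediate, since the underlying $\CC$-morphism is unchanged; for fullness, any monotone $f$ between $T$-spaces in the image has its witness $\overline{f}$ forced by $1_{TY}\cdot\overline{f}=Tf$ to equal $Tf$, whereupon the second equation becomes the algebra-homomorphism condition.

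Every space $F(X,c)$ manifestly satisfies (A). The main obstacle will be conversely to show that any $(X,A,d,c)$ satisfying (A) is isomorphic in $T$-{\bf Spa}$(\CC)$ to some $F(X,\tilde c)$. I would set $\tilde c:=c\cdot d^{-1}:TX\to X$. The unit law $\tilde c\cdot\eta_X=1_X$ falls out of axiom (R) by substituting $i=d^{-1}\cdot\eta_X$ into $c\cdot i=1_X$. For associativity, I exploit that when $d$ is iso, so is $Td$ by functoriality, so the pullback $TA\times_{TX}A$ is canonically identified with $TA$, with $d'$ becoming an iso and $c'=d^{-1}\cdot Tc\cdot d'$ under this identification. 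Axiom (T) then produces $n:TA\to A$ with $d\cdot n=\mu_X\cdot Td$ and $c\cdot n=\tilde c\cdot Tc$; solving the first for $n$, substituting into the second, and postcomposing with $(Td)^{-1}=T(d^{-1})$ yields $\tilde c\cdot\mu_X=\tilde c\cdot T\tilde c$. Thus $(X,\tilde c)\in\CC^T$, and the identity on $X$ with witness $d$ provides an isomorphism $(X,A,d,c)\to F(X,\tilde c)$ in $T$-{\bf Spa}$(\CC)$, with inverse given by $1_X$ and witness $d^{-1}$. The only genuinely non-routine step is this translation of the one-sided composition law (T) into the symmetric algebra-associativity square, which works precisely because invertibility of $d$ (and hence of $Td$) lets the pullback collapse.
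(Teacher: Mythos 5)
Your proof is correct and follows essentially the same route as the paper: identify the pullback $TA\times_{TX}A$ with $TTX$ (resp.\ $TA$) when $d$ is an identity (resp.\ an isomorphism), read off $i=\eta_X$ and $m=\mu_X$, and observe that monotonicity reduces to the homomorphism condition; the paper merely states the converse direction (essential image) as routine, whereas you carry it out explicitly via $\tilde c=c\cdot d^{-1}$. The only nit is terminological: in the associativity step you \emph{pre}compose with $(Td)^{-1}=T(d^{-1})$, not postcompose.
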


\begin{proof} Given a $T$-algebra $(X,c)$, having put $A=TX$ and $d=1_{TX}$, and using the notation
of Definition \ref{TspaceinCdef}, we see immediately that we may take $TTX$ to assume the role of $TA\times_{TX} A$, with $d'=1_{TTX}$ and $c'=Tc$. Then $i=\eta_X$ and $m=\mu_X$ are (the only choices) of morphisms to make $(X,TX,1_{TX},c)$	a $T$-space. Furthermore, a $\CC$-morphism $f:X\to Y$ of $T$-algebras becomes monotone precisely when it is a $T$-homomorphism. The category equivalence $\CC^T\simeq T$-{\bf ASpa}$(\CC)$ follows routinely. 
\end{proof}

\begin{theorem}\label{AlgreflinSpa}
$T$-{\bf ASpa}$(\CC)$ is reflective in $T$-{\bf Spa}$(\CC)$.
\end{theorem}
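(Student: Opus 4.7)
The plan is to adapt the adjoint-functor-theorem argument of Theorem \ref{StoneCechTheorem}(i) to the present categorical setting, working via the isomorphism $\CC^T \simeq T\text{-}\mathbf{ASpa}(\CC)$ of Proposition \ref{AlgfullyinSpa}.

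First I would translate monotonicity into a single equation in $\CC^T$: for a $T$-algebra $(K,k)$ viewed as a $T$-space via Proposition \ref{AlgfullyinSpa}, a morphism $f\colon X \to K$ in $\CC$ is monotone iff $k\cdot Tf\cdot d = f\cdot c$. Taking the adjoint mate $f^{\sharp} = k\cdot Tf\colon (TX,\mu_X) \to (K,k)$ in $\CC^T$ and applying the monad unit and multiplication laws, this rewrites equivalently as
$$f^{\sharp}\cdot\mu_X\cdot Td \;=\; f^{\sharp}\cdot Tc.$$
So the reflection of $X$ into $T\text{-}\mathbf{ASpa}(\CC)$ should correspond to the coequalizer in $\CC^T$ of the parallel pair
$$\mu_X\cdot Td,\; Tc \colon (TA,\mu_A) \rightrightarrows (TX,\mu_X),$$
which are genuine $T$-homomorphisms between free algebras.

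To construct this coequalizer without assuming cocompleteness of $\CC$, I would invoke the standard monadic toolkit available here: $\CC^T$ is complete (since the forgetful functor $U^T$ creates limits), well-powered (since $U^T$ reflects subobjects), and, crucially, inherits a (regular epi, mono)-factorization system from $\CC$ because $T$ preserves regular epimorphisms. This allows the argument of Theorem \ref{StoneCechTheorem}(i): take a representative small family of regular-epi $T$-homomorphisms $q_i\colon (TX,\mu_X) \twoheadrightarrow (Q_i,c_i)$ coequalizing the displayed pair, form their product in $\CC^T$, and factor the induced morphism as $(TX,\mu_X) \twoheadrightarrow (Q,c_Q) \rightarrowtail \prod_i(Q_i,c_i)$. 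Then $\beta_X := q\cdot\eta_X\colon X \to U^T Q$ is the reflection morphism: monotonicity holds because $q$ (being a regular epi through which every such $q_i$ factors, and thus a coequalizer of the same pair) satisfies $q\cdot\mu_X\cdot Td = q\cdot Tc$; universality holds because the mate $f^{\sharp}$ of any monotone $f\colon X \to U^T K$ factors through its image in $\CC^T$, which is one of the $q_i$ and hence factors uniquely through $q$.

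The main technical hurdle is the lifting of the (regular epi, mono)-factorization system from $\CC$ to $\CC^T$: given $f\colon (A,a)\to(B,b)$ in $\CC^T$ and a factorization $U^T f = m\cdot e$ in $\CC$ with $e$ regular epi and $m$ mono, one must endow the intermediate object with a $T$-algebra structure making $e$ and $m$ into $T$-homomorphisms. The preservation of regular epis by $T$ ensures that $Te$ is regular epi, so the required algebra structure is produced by the diagonal fill-in applied to the square $b \cdot T(m \cdot e) = m \cdot e \cdot a$. Aside from this point, the identifications (monotonicity $\Leftrightarrow$ coequalizing condition, well-poweredness of $\CC^T$ from well-poweredness of $\CC$, closure of $\CC^T$ under limits) are all routine consequences of the monadic adjunction $F^T \dashv U^T$ combined with the standing hypotheses on $\CC$ and $T$.
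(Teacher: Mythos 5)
Your proposal is correct and follows essentially the same route as the paper's proof: lift the (regular epi, mono)-factorization system from $\CC$ to $\CC^T$ using the preservation of regular epimorphisms by $T$, observe that $\CC^T$ is cowell-powered with respect to regular epimorphisms, take a representative set of regular quotients of the free algebra $(TX,\mu_X)$ satisfying the monotonicity condition, and extract the reflection as the regular image of the induced morphism into their product, with universality obtained by factoring the mate $g^{\sharp}$ of an arbitrary monotone $g$. The only (correct, and slightly tidier) cosmetic difference is that you package the monotonicity condition as the statement that $f^{\sharp}$ coequalizes the pair $\mu_X\cdot Td,\;Tc:(TA,\mu_A)\rightrightarrows(TX,\mu_X)$ --- the direct generalization of Lemma \ref{StoneCechLemma}(ii) --- whereas the paper works with the equation $b_i\cdot Tq_i\cdot T\eta_X\cdot d=q_i\cdot\eta_X\cdot c$ directly and never names the coequalizer.
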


\begin{proof}
It suffices to show that the functor	 $\CC^T\longrightarrow T$-{\bf Spa}$(\CC)$ of Proposition
\ref{AlgfullyinSpa} has a left adjoint. First recall that, since $T$ is assumed to preserve regular epimorphisms, with $\CC$ also $\CC^T$ has (regular epi, mono)-factorizations, which the forgetful functor $\CC^T\to \CC$ preserves. Moreover, every regular epimorphism with domain $X$ is determined by its kernel pair which, in turn, is determined by a (regular) subobject of $X\times X$; consequently, as $\CC$ is well-powered, $\CC$ must be cowell-powered with respect to regular epimorphisms, and the same is true for $\CC^T$.

Now, in order to find a universal arrow for a $T$-space $(X,A,d,c)$, one considers a representative set $q_i: (TX,\mu_X)\to (Q_i,b_i)\;(i\in I)$ of those regularly epic $T$-homomorphisms for which $q_i\cdot\eta_X:X\to Q_i$ is monotone; the monotonicity means that
$$b_i\cdot Tq_i\cdot T\eta_X\cdot d=q_i\cdot \eta_X\cdot c$$
holds for all $i\in I$. With $\overline{b}$ denoting the $T$-algebra structure of $\prod_{i\in I}Q_i$, we have the induced $T$-homomorphism 
$f:(TX,\mu_X)\to(\prod_{i\in I}Q_i,\overline{b})$ which satisfies $\overline{b}\cdot Tf= f\cdot \mu_X$ and 
$\pi_i\cdot \overline{b}=b_i\cdot T\pi_i$, with product projections $\pi_i\;(i\in I)$. We claim that, with the (regular epi, mono)-factorization
$$\xymatrix{(TX,\mu_X)\ar[r]^e & (Q,b)\ar[r]^{m\quad} & (\prod_{i\in I} Q_i,\overline{b})\\
}$$
of $f$ in $\CC^T$, the morphism $e\cdot \eta_X:X\to Q$ is monotone. Indeed, since the family $\pi_i\cdot m\, (i\in I)$ is collectively monic, from
\begin{align*}
\pi_i\cdot m\cdot b\cdot Te\cdot T\eta_X\cdot d &=\pi\cdot \overline{b}\cdot Tm\cdot Te\cdot Te\cdot T\eta_X\cdot d\\
& =	\pi_i\cdot\overline{b}\cdot Tf\cdot T\eta_X\cdot d\\
& = b_i\cdot T\pi_i\cdot Tf\cdot T\eta_X \cdot d\\
& = b_i\cdot Tq_i\cdot T\eta_X\cdot d\\
& = q_i\cdot \eta_X\cdot c\\
& = \pi_i\cdot m\cdot e\cdot\eta_X\cdot c
\end{align*}
for all $i\in I$ we can deduce\; $b\cdot Te\cdot T\eta_X\cdot d= e\cdot\eta_X\cdot c$, which means that $e\cdot\eta_X:X\to Q$ is monotone. In fact, $e\cdot\eta_X$ is the desired universal arrow, as one routinely verifies: given any monotone morphism $g:X\to Y$ to a $T$-algebra $(Y,a)$, form the (regular epi, mono)-factorization $g^{\sharp}=m'\cdot e'$ of $g$'s  mate $g^{\sharp}=a\cdot Tg: TX\to Y$. Then $e'\cdot \eta_X$ inherits its  monotonicity from $f$, so that $e'$ must be isomorphic to one of the $q_i$s, each of which factors through $e$. Hence, for some isomorphism $t$ and $j\in I$, one readily sees that $h=m'\cdot t\cdot \pi_j\cdot m:Q\to Y$ is the only $T$-homomorphism satisfying $g=h\cdot e\cdot\eta_X$.
\end{proof}


Like in the case $\CC=\Set$, in what follows we denote the reflection morphism of $X\in T$-{\bf Spa}$(\CC)$ to $T$-{\bf ASpa}$(\CC)$ by $\beta_X:X\to\mathrm BX$ also in the general case. We extend the arrangements of Definition \ref{Ucomplreg} to the general case in an obvious manner:

\begin{defi}
\rm 
A $T$-space $X=(X,A,d,c)$ is said to satisfy condition
\begin{itemize}
\item[(K)] if $d:A\to TX$ is a split epimorphism in $\CC$;
\item[(H)] if $d:A\to TX$ is a monomorphism in $\CC$;
	\item[(C)] if $X$ carries the cartesian structure with respect to $\beta_X$ and the topological functor $T\text{-{\bf Spa}}(\CC)\to\CC$ (see Proposition \ref{SpaTtopological}); 
	\item[(F)] if $\beta_X$ is a monomorphism in $T\text{-{\bf Spa}}(\CC)$;	
	\item[(CF)] if $X$ satisfies (C) and (F); that is: if $\beta_X$ is a regular	 monomorphism in $T\text{-{\bf Spa}}(\CC)$.
		\end{itemize}
For {\bf X} = {\bf K}, {\bf H}, {\bf C}, {\bf F}, and {\bf CF}, this defines the full subcategories $$T\text{-{\bf XSpa}}(\CC)\hookrightarrow T\text{-{\bf Spa}}(T).$$  
\end{defi}

\begin{rem}
\rm 
An explanation for the definition of condition (K) seems to be called for. Since, under our general hypotheses, regular epimorphisms in $\CC$ appear to assume the role of surjections in $\Set$, why not ask that $d$ be a regular epimorphism in $\CC$, rather than a split epimorphism? In fact, the case for split epimorphism is compelling when one looks at $\langle d,c\rangle: A\to TX\times X$ as an arrow $A:TX\to X$ in the 2-category {\bf Rel}$(\CC)$, which has the same objects as $\CC$, and where the relational composite $S\circ R$ of the arrow $R:X\to Y$ with $S:Y\to Z$ is obtained as the regular (!) image of the obvious morphism $R\times_Y S\to X\times Z$ in $\CC$; the diagonal relation $\Delta_X$ serves as an identity arrow, and 2-cells are given by preorder.

Now, as in every 2-category, by (Lawvere's) definition, our arrow $A: TX\to X$ is a {\em map} in {\bf Rel}$(\CC)$
if $A$ is left adjoint to its converse $A^{\circ}$ (given by $\langle c,d\rangle$); that is, if we have:

 One can now proceed and show routinely
  that (K') holds precisely when $d$ is a split epimorphism in $\CC$, and that (H') equivalently means that $d$ is a monomorphism in $\CC$.
\end{rem}

Let us confirm that Proposition \ref{lowhangingfruit} and Corollary \ref{Hausdorffreflection}}  remain true in the current context:
\begin{theorem}\label{lowhangingfruitgeneralized}
Let $f:X\to Y$ and $f_i:X\to Y_i\; (i\in I)$	 be monotone morphisms of $T$-spaces (with $I$ possibly large). Then:
\begin{itemize}
\item[{\em (1)}]  If $X$ satisfies {\em (K)}, so does $Y$, provided that either {\em (a)} $f$ is a split epimorphism in $\CC$, or {\em (b)} $f$ is a regular epimorphism in $\CC$ and $Y$ satisfies {\em (H)}.
\item[{\em(2)}] {\em (Tychonoff's Theorem)} The product of a family of $T$-spaces satisfying {\em (K)} also satisfies {\em (K)}.
\item[{\em (3)}] If the family $(f_i)_{\in I}$ is jointly monic in $\CC$, and if every $Y_i$ satisfies {\em (H)}, then so does $X$.
\item[{\em (4)}] $T\text{-{\bf HSpa}}(\CC)$ is regular-epi-reflective in $T\text{-{\bf Spa}}(\CC)$, and $T\text{-{\bf ASpa}}(\CC)$ is regular-epi-reflective in $T\text{-{\bf KSpa}}(\CC)$.\end{itemize}	
\end{theorem}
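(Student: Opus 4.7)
Parts (1) and (3) reduce to short diagram chases based on the monotonicity relation $d_Y \cdot \overline{f} = Tf \cdot d_X$ and the joint monicity of $(d_X, c_X)$. For (1)(a), if $s$ is a section of $d_X$ in $\CC$ and $t$ is a section of $f$, then $\overline{f} \cdot s \cdot Tt$ is a section of $d_Y$, as is immediate from the monotonicity relation. For (1)(b), since $T$ preserves regular epimorphisms by hypothesis, $Tf$ is regular epi; the identity $Tf = d_Y \cdot (\overline{f} \cdot s)$, for any section $s$ of $d_X$, exhibits $Tf$ as factoring through the monomorphism $d_Y$, and in a (regular epi, mono)-factorization system any regular (hence extremal) epimorphism that factors through a mono forces that mono to be an isomorphism; in particular $d_Y$ is a split epi. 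For (3), given $u, v \colon Z \rightrightarrows A_X$ with $d_X \cdot u = d_X \cdot v$, the monotonicity relation yields $d_i \cdot \overline{f_i} \cdot u = d_i \cdot \overline{f_i} \cdot v$, so monicity of each $d_i$ forces $\overline{f_i} \cdot u = \overline{f_i} \cdot v$ and hence $f_i \cdot c_X \cdot u = f_i \cdot c_X \cdot v$ for all $i$; joint monicity of $(f_i)_{i\in I}$ and of $(d_X, c_X)$ then forces $u = v$.

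\emph{Part (2) is the main obstacle}, because one needs to unpack the explicit description of products in $T\text{-{\bf Spa}}(\CC)$ supplied by Proposition \ref{SpaTtopological}: if $X = \prod_j X_j$ is endowed with the initial structure with respect to the projections $\pi_j$, then $\langle d, c\rangle \colon A \rightarrowtail TX \times X$ is the intersection over $j$ of the pullbacks of $\langle d_j, c_j\rangle$ along $T\pi_j \times \pi_j$. Given sections $s_j$ of each $d_j$, let $y = \langle c_j \cdot s_j \cdot T\pi_j\rangle_j \colon TX \to X$ be the morphism induced by the product universal property. The identity
\[ (T\pi_j \times \pi_j) \cdot \langle 1_{TX}, y\rangle \;=\; \langle d_j, c_j\rangle \cdot (s_j \cdot T\pi_j) \]
then witnesses that $\langle 1_{TX}, y\rangle$ factors through each pullback and, therefore, through their intersection $A$, yielding the desired section of $d$.

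\emph{Part (4) follows by an Adjoint-Functor-Theorem argument modelled on Corollary \ref{Hausdorffreflection} and Theorem \ref{AlgreflinSpa}.} Given $X \in T\text{-{\bf Spa}}(\CC)$, take a representative set of monotone regular epimorphisms $q_i \colon X \twoheadrightarrow Z_i$ with each $Z_i \in T\text{-{\bf HSpa}}(\CC)$ (available by the cowellpoweredness with respect to regular epis used already in Theorem \ref{AlgreflinSpa}), and (regular epi, mono)-factor the induced morphism $X \to \prod_i Z_i$ as $X \twoheadrightarrow \widetilde{X} \rightarrowtail \prod_i Z_i$. By (3) applied to the jointly monic family of product projections, $\prod_i Z_i$ satisfies (H); by (3) applied to the single monomorphism $\widetilde{X} \rightarrowtail \prod_i Z_i$, so does $\widetilde{X}$. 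The regular epimorphism $X \twoheadrightarrow \widetilde{X}$ is then the sought reflection. If in addition $X \in T\text{-{\bf KSpa}}(\CC)$, then since the reflection is a regular epimorphism preserved by the topological forgetful functor to $\CC$ and since $\widetilde{X}$ already satisfies (H), part (1)(b) yields that $\widetilde{X}$ also satisfies (K), placing it in $T\text{-{\bf ASpa}}(\CC)$; the same morphism then serves as the reflection into $T\text{-{\bf ASpa}}(\CC)$.
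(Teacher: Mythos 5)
Your proof is correct, and parts (1)(a), (2), (3) and (4) follow essentially the same route as the paper's: (2) uses the explicit initial-lift description of the product from Proposition \ref{SpaTtopological} (intersection of pullbacks rather than the equivalent limit presentation, but the induced section of $d$ is constructed identically), (3) is the same diagram chase, and (4) is the same Adjoint-Functor-Theorem argument, with the second assertion obtained by feeding the regular-epi reflection into (1)(b). The one genuine difference is in (1)(b). The paper constructs a section of $d_Y$ by hand: it shows that $\overline{f}\cdot s$ coequalizes the kernel pair of the regular epimorphism $Tf$ (using that $d_Y$ is monic), hence factors through $Tf$ by a morphism $t$, and then checks $d_Y\cdot t=1_{TY}$ because $Tf$ is epic. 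You instead note that the identity $Tf=d_Y\cdot(\overline{f}\cdot s)$ exhibits the regular --- hence extremal --- epimorphism $Tf$ as factoring through the monomorphism $d_Y$, which forces $d_Y$ to be an isomorphism. Your version is shorter and delivers the formally stronger-looking conclusion that $Y$ actually satisfies (A); this is of course equivalent to what the paper obtains, since a monomorphism admitting a section is an isomorphism, and both arguments use exactly the same hypotheses (preservation of regular epimorphisms by $T$, and condition (H) for $Y$).
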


\begin{proof}
(1) By hypothesis, we have $s:A\to TX$ with $d_X\cdot s=1_{TX}$. Under assumption (a), one also has $g:Y\to X$ with $f\cdot g=1_Y$, which makes $d_Y$ have a section as well:
$$d_Y\cdot\overline{f}\cdot s\cdot Tg=Tf\cdot d_X\cdot s\cdot Tg =Tf\cdot Tg=1_{TY}.$$
Under assumption (b), we note that, by our general assumptions, with $f$ also $Tf:TX\to TY$ is a regular epimorphism. Hence, whenever $Tf\cdot a=Tf\cdot b$ in $\CC$, we have
$$d_Y\cdot \overline{f}\cdot s\cdot a= Tf\cdot d_X\cdot s\cdot a= Tf\cdot d_X\cdot s\cdot b=d_Y\cdot \overline{f}\cdot s\cdot b, $$
which implies $\overline{f}\cdot s\cdot a=\overline{f}\cdot s\cdot b$ since $Y$ satisfies (H). Consequently, $\overline{f}\cdot s$ factors through $Tf$, by a morphism $t:TY\to B$ which turns out to serve as a section for $d_Y$. Indeed, this follows from $d_Y\cdot t\cdot Tf=d_Y\cdot\overline{f}\cdot s=Tf\cdot d_X\cdot s= Tf$ since $Tf$ is epic.
\medskip

(2)  Let $(X,A,d,c)$ be the product of $(X_i,A_i,d_i,c_i)\;(i\in I)$ in $T\text{-{\bf Spa}}(\CC)$, with product projections $p_i$ and every $X_i$ satisfying (K). As the product is constructed by initially lifting the product of the underlying $\CC$-objects, with Proposition \ref{SpaTtopological} one sees that $A$ is obtained as the limit in $\CC$ of the diagram
\begin{center}
$\xymatrix{TX\ar[d]_{Tp_i} && X\ar[d]^{p_i}\\
TX_i & A_i\ar[l]_{d_i}\ar[r]^{c_i} & X_i\;,\\
}$	
\end{center}
with limit projections $(d:A\to TX,\; \overline{p_i}:A\to A_i,\; c:A\to X)$. Now, having sections $s_i$ of $d_i$ for every $i\in I$, the product property of $X$ in $\CC$ gives us the morphism 
$h: TX\to X$ with $p_i\cdot h= c_i\cdot s_i\cdot Tp_i\; (i\in I)$. Since also $d_i\cdot s_i\cdot Tp_i= Tp_i\cdot 1_{TX}\; (i\in I) $, the limit property makes the morphisms ($1_{TX},\; s_i\cdot Tp_i:TX\to A_i,\; h)$ factor through the limit projections $(d, \overline{p_i}, c)$, by a morphism $s:TX\to A$ which, in particular, must satisfy $d\cdot s=1_{TX}$ and, hence, makes $X$ satisfy (K).

\medskip
(3) With $X=(X,A,d,c)$ and $Y_i=(Y_i,B_i,d_i,c_i)$ and every $d_i\;(i\in I)$ monic in $\CC$, assume $d\cdot a=d\cdot b$. Composition with $Tf_i$ gives $d_i\cdot \overline{f_i}\cdot a=d_i\cdot\overline{f_i}\cdot b$, whereupon first $\overline{f_i}\cdot a=\overline{f_i}\cdot b$ and then $f_i\cdot c\cdot a=f_i\cdot c\cdot b\;(i\in I) $ follow. As $(f_i)_{i\in I}$ is monic, we conclude $c\cdot a=c\cdot b$ which, in conjunction with $d\cdot a=d\cdot b$, gives $a=b$.

(4) The first assertion is an easy consequence of the stability property (3). In Theorem \ref{complregrefl}(4) we outline the proof in the completely analoguous situation where (H) is replaced by (F). To obtain the second assertion, just note that, by (1), version (b), the reflector of the first assertion will preserve property (K).
\end{proof}

Next we confirm that Theorem \ref{lowhangingfruitbis} and Corollary \ref{lowcoro}  remain true in the current context:

\begin{theorem}\label{complregrefl}
 Let $f_i:X\to Y_i\;(i\in I)$ be (a possibly large) family of monotone morphisms of $T$-spaces with common domain $X$. Then:
\begin{itemize}
\item[{\em(1)}] If $X$ carries the initial structure with respect to $(f_i)_{i\in I}$ and the topological functor $T\text{-{\bf Spa}}(\CC)\to\CC$, and if every $Y_i$ satisfies {\em (C)}, then so does $X$. 
\item[{\em(2)}] If $(f_i)_{i\in I}$ be collectively monic, and if every $Y_i$ satisfies {\em (F)}, then so does $X$.
\item[{\em (3)}]
$T\text{-{\bf CSpa}}(\CC)$ is simultaneously epi- and mono-reflective in $T\text{-{\bf Spa}}(\CC)$. 
\item[{\em (4)}] $T\text{-{\bf FSpa}}(\CC)$ is regular-epi-reflective in $T\text{-{\bf Spa}}(\CC)$.	
\item[{\em (5)}] $T\text{-{\bf CFSpa}}(\CC)$ is epi-reflective in $T\text{-{\bf Spa}}(\CC)$.	 
\end{itemize}	
\end{theorem}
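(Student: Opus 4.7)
The plan is to follow the Set-based arguments of Theorem \ref{lowhangingfruitbis} and Corollary \ref{lowcoro}, but now driven by (i) the functoriality of the reflector $\mathrm B:T$-{\bf Spa}$(\CC)\to T$-{\bf ASpa}$(\CC)$ established in Theorem \ref{AlgreflinSpa} together with the naturality of its unit $\beta$, (ii) the topologicity of $T$-{\bf Spa}$(\CC)\to\CC$ from Proposition \ref{SpaTtopological}, and (iii) the (regular epi, mono)-factorization system that topologicity provides. The single diagram driving everything is the naturality square $\beta_{Y_i}\cdot f_i=\mathrm Bf_i\cdot\beta_X$.

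For (1), if every $Y_i$ satisfies (C) then each $\beta_{Y_i}$ is an initial morphism; since $(f_i)_{i\in I}$ is initial by hypothesis, the composite source $(\beta_{Y_i}\cdot f_i)_{i\in I}=(\mathrm Bf_i\cdot\beta_X)_{i\in I}$ is initial. A standard cancellation then delivers initiality of its left factor $\beta_X$: any $\phi:W\to X$ with $\beta_X\cdot\phi$ monotone also has $\mathrm Bf_i\cdot\beta_X\cdot\phi=\beta_{Y_i}\cdot f_i\cdot\phi$ monotone, so $f_i\cdot\phi$ is monotone by (C) of each $Y_i$, and then $\phi$ is monotone by initiality of $(f_i)_{i\in I}$. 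Hence $X$ satisfies (C). Part (2) is the monic analog: each $\beta_{Y_i}$ is monic by (F), $(f_i)$ is jointly monic by hypothesis, so the composite family is jointly monic, and left-cancellation of monics forces $\beta_X$ itself to be a monomorphism.

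For (3), I equip the underlying $\CC$-object of $X$ with the initial $T$-structure relative to $\beta_X:X\to\mathrm BX$, calling the result $X^C$; by (1) it satisfies (C), and since a topological functor reflects both monos and epis, the underlying-identity morphism $\iota_X:X\to X^C$ is simultaneously monic and epic. Any monotone $f:X\to Z$ into a $Z$ satisfying (C) is also monotone as a map $X^C\to Z$, since by (C) of $Z$ it suffices to check that $\beta_Z\cdot f=\mathrm Bf\cdot\beta_X$ is monotone on $X^C$, which it is by construction. For (4), I form the (regular epi, mono)-factorization $\beta_X=m\cdot e$ in $T$-{\bf Spa}$(\CC)$ with $e:X\twoheadrightarrow X^F$; the universal property of $\beta_{X^F}$ gives $m=\bar m\cdot\beta_{X^F}$ for a unique $T$-homomorphism $\bar m$, so $m$ monic forces $\beta_{X^F}$ monic and $X^F\in T$-{\bf FSpa}$(\CC)$. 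Universality of $e$ follows from the diagonal fill-in of the factorization system applied to any $f:X\to Z$ with $Z\in T$-{\bf FSpa}$(\CC)$, using that $\beta_Z$ is monic. For (5), I reflect first into $T$-{\bf FSpa}$(\CC)$ via $e:X\twoheadrightarrow X^F$ as in (4), and then endow the underlying object of $X^F$ with the initial $T$-structure with respect to $m:X^F\to\mathrm BX$, producing $\mathrm AX$; by (1) it satisfies (C), and since the underlying $\CC$-morphism of $m:\mathrm AX\to\mathrm BX$ is still monic, the factorization $m=\bar m\cdot\beta_{\mathrm AX}$ again forces $\beta_{\mathrm AX}$ monic, so (F) survives. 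The composite $X\to X^F\to\mathrm AX$ is epic, as a regular epi followed by a bimorphism, and its universal property combines the arguments of (3) and (4).

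The main obstacle will be step (5): in passing from $X^F$ to $\mathrm AX$ the structure is coarsened, and the $\mathbf A$-reflection is structure-sensitive, so a priori $\beta_{\mathrm AX}$ could differ from $\beta_{X^F}$ in ways that spoil monicity. The rescue is that the underlying $\CC$-morphism of $m$ is unchanged by the structural modification, so the universal factorization $m=\bar m\cdot\beta_{\mathrm AX}$ still yields a monomorphism $\beta_{\mathrm AX}$ in $\CC$, which by topologicity is automatically a monomorphism in $T$-{\bf Spa}$(\CC)$. A secondary delicacy, already in part (1), is the careful cancellation argument showing that the initial status of the composite source descends to $\beta_X$; this is where the interplay between initiality of each $\beta_{Y_i}$ and that of $(f_i)_{i\in I}$ must be explicitly unpacked.
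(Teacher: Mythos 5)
Your proof is correct, and while parts (1) and (2) follow the same naturality-square argument as the paper (which simply transports the proof of Theorem \ref{lowhangingfruitbis} to the general setting), your treatment of (3)--(5) takes a genuinely different route. The paper obtains the reflections by Adjoint-Functor-Theorem-style bookkeeping: for (3) it equips $X$ with the initial structure relative to the (large) family of \emph{all} monotone morphisms into (C)-objects, and for (4) it factors the family of \emph{all} morphisms into (F)-objects as a regular epimorphism followed by a jointly monic family, using cowell-poweredness and products. You instead manufacture each reflection directly from the single morphism $\beta_X$: the (C)-reflection is $X$ re-structured initially along $\beta_X$, the (F)-reflection is the (regular epi, mono)-image of $\beta_X$, with universality supplied by the diagonal fill-in of the factorization system against the monomorphism $\beta_Z$, and the (CF)-reflection is the initial restructuring of that image. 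This buys explicit descriptions of the reflections, mirroring the classical presentations of the complete regularization and of the functionally-Hausdorff quotient of a topological space, and it avoids all solution-set arguments; the paper's route is more generic and is the one that would survive in an abstract epireflective-hull situation where $\mathrm B$ is not already available. Your verifications hold up: in (3) and (5) the claim that the restructured object satisfies (C) is exactly an application of your part (1) to the single initial morphism into the $T$-algebra $\mathrm BX$, which satisfies (C) because $\beta_{\mathrm BX}$ is an isomorphism, so no identification of $\mathrm B(X^C)$ with $\mathrm BX$ is actually needed; and in (4) and (5) the monicity of $\beta_{X^F}$, respectively $\beta_{\mathrm AX}$, does follow from factoring the monomorphism $m$ through it, which disposes of the ``secondary delicacy'' you flag at the end. (One cosmetic point: it is faithfulness of the topological forgetful functor, rather than topologicity per se, that makes the identity-carried morphism $X\to X^C$ both monic and epic.)
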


\begin{proof}
	Since the proofs for (1) and (2) proceed exactly as in the special case $\CC=\Set$ and $T=\UU$, we restrict ourselves to indicating only how (3)-(5) follow quite generally from (1) and (2) (for more details we refer to \cite{AHS}, \cite{MonTop}).
	
	(3) Given a $T$-space $X$, we consider the (large) family $f_i:X\to Y_i\; (i\in I)$ of {\em all} monotone morphisms with domain $X$ and codomain satisfying (C), and let $X'$ be the $T$-space that has the same underlying $\CC$-object as $X$ but carries the initial $\T$-structure with respect to $(f_i)_{i\in I}$. Then the identity morphism $X\to X'$ is monotone and serves as a reflection of $X$ into $T\text{-{\bf CSpa}}(\CC)$. It is obviously both epic and monic.
	
	(4) Given $X$, we now let $f_i:X\to Y_i\; (i\in I)$ be the family of all monotone morphisms with codomains lying in 	$T\text{-{\bf FSpa}}(\CC)$. It will suffice to show that this family (and, in fact, any family in $T\text{-{\bf Spa}}(\CC)$ with fixed domain) factors as $f_i=m_i\cdot e\; (i\in I)$, with a regular epimorphism $e$ and a monic family $(m_i)_{i\in I}$, as follows. First, for every $i\in I$ consider the (regular epi, mono)-factorization $f_i=u_I\cdot e$ of $f_i$. Since $\CC$ (and, hence, $T\text{-{\bf FSpa}}(\CC))$ is cowell-powered with respect to regular epimorphisms, there is a representative set of non-isomorphic $e_i$s; for notational simplicity, we assume that the $e_i$s  are already non-isomorphic and form the (regular epi, mono)-factorization $h=m\cdot e$ of the morphism $h:X\to\prod_{i\in I} Y_i$ that is induced by the family $(e_i)_{i\in I}$. With another (regular epi, mono)-factorization $h=m\cdot e$ we can finish the proof, by setting $m_i=u_i\cdot \pi_I\cdot m \;(i\in I)$, where $\pi_i$ is a product projection.
	
	(5) First apply the reflector of (4) and then the reflector of (3).
		\end{proof} 
		
From Proposition \ref{SpaTtopological} and Theorem	\ref{complregrefl}(1) we conclude immediately:		
		
\begin{cor}
The forgetful functor $T\text{-{\bf CSpa}}(\CC)\to\CC$ is topological, with initial liftings formed like in $T\text{-{\bf Spa}}(\CC)$.
\end{cor}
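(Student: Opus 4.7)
The plan is to combine the two cited results in a direct manner. Given a source $X \in \CC$ together with a (possibly large) family $f_i : X \to Y_i$ $(i \in I)$ where each $Y_i$ is an object of $T\text{-{\bf CSpa}}(\CC)$, I would first apply Proposition \ref{SpaTtopological} to obtain the initial lifting of $(f_i)_{i \in I}$ to a $T$-space structure $X^* = (X, A, d, c)$ in $T\text{-{\bf Spa}}(\CC)$, so that the identity on $X$ makes every $f_i : X^* \to Y_i$ monotone, and any $\CC$-morphism $g : Z \to X$ from a $T$-space $Z$ is monotone into $X^*$ precisely when every composite $f_i \cdot g$ is monotone.

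Next I would invoke Theorem \ref{complregrefl}(1): since each $Y_i$ satisfies (C) and the structure on $X^*$ is exactly the initial one with respect to $(f_i)_{i\in I}$ and the topological functor $T\text{-{\bf Spa}}(\CC) \to \CC$, the $T$-space $X^*$ itself satisfies (C), i.e.\ $X^* \in T\text{-{\bf CSpa}}(\CC)$.

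It remains to verify the universal property of initial liftings in the subcategory. Because $T\text{-{\bf CSpa}}(\CC)$ is full in $T\text{-{\bf Spa}}(\CC)$, for any $Z \in T\text{-{\bf CSpa}}(\CC)$ and any $\CC$-morphism $g : Z \to X$ with $f_i \cdot g$ monotone for all $i \in I$, the initiality in $T\text{-{\bf Spa}}(\CC)$ already yields that $g : Z \to X^*$ is monotone, and monotonicity in $T\text{-{\bf CSpa}}(\CC)$ is exactly monotonicity in $T\text{-{\bf Spa}}(\CC)$. Hence $X^*$ is also the initial lifting in $T\text{-{\bf CSpa}}(\CC)$, and initial liftings there are formed as in $T\text{-{\bf Spa}}(\CC)$, as claimed.

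No step is genuinely difficult here; the only thing that needs care is to note that (C) is preserved under the initial lifting of an \emph{arbitrary} (possibly large) family, which is precisely what Theorem \ref{complregrefl}(1) provides, and that fullness of the embedding $T\text{-{\bf CSpa}}(\CC) \hookrightarrow T\text{-{\bf Spa}}(\CC)$ transports the universal property without modification.
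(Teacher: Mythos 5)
Your argument is correct and is exactly the paper's intended derivation: the paper states the corollary as an immediate consequence of Proposition \ref{SpaTtopological} and Theorem \ref{complregrefl}(1), and your write-up simply makes explicit the standard observation that a full subcategory closed under initial liftings inherits topologicity, with the liftings unchanged. Nothing is missing.
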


\begin{exa}
\rm 
	Let $M=(M,e,m)$ be a monoid. An $M$-{\em set} $X$ is a set $X$ that comes with a (multiplicatively written) left action $M\times X\to X$ that is unitary and associative ($ex=x$ and $t(sx)=(ts)x$ for all $s,t\in M$ and $x\in X$). The structure is equivalently described by a homomorphism $M\to \Set(X,X)$ of monoids, or by a functor $M\to\Set$ that maps the only object of the category $M$ to the set $X$; we will call that functor $X$ again and write its value at the morphism $s$ of $M$ simply as the ``translation'' map $s:X\to X,\,x\mapsto sx$. A morphism $f:X\to Y$ of $M$-sets is an equivariant map (so that $f(sx)=s(fx)$ for all $s\in M,\,x\in X$). This defines the category
	$$\Set^M=[M,\Set]$$
	of $M$-sets, which may be thought of as both, the Eilenberg-Moore category of the monad $M\times (\text{-})$ induced by $M$, and the functor category of $\Set$-valued functors on $M$. Under the latter interpretation, we may trade $\Set$ for any category $\CC$ and obtain the category of  $M$-{\em objects} $X$ in $\CC$, {\em i.e.}, of $\CC$-objects $X$ that come with endomorphisms $s:X\to X\,(s\in M)$, such that $e$ is the identity morphism on $X$ in $\CC$ and the monoid product $ts$ is also the composite of the morphisms of $s$ and $t$ in $\CC$. For example,
	$$\mathbf{Top}^M=[M,\mathbf{Top}]$$
	is the category of $M$-topological spaces; its objects are topological spaces that carry an $M$-set structure making the translation maps continuous, and its morphisms are continuous equivariant maps.
	
	\medskip
	Now {\em assume $M$ to be commutative}; this makes the translation maps of any $M$-set equivariant.  The paper \cite{AdamakSousa2019} gives the general categorical reasons for why the ultrafilter monad $\UU$ may be ``lifted'' from $\Set$ to $\Set^M$, to become a monad $\UU^M$ on $\Set^M$, with the same underlying sets as $\UU$, a fact that one may quite easily check directly, as follows. First observe that, with $X$ also the power-set $\PP X$ becomes an $M$-set, when we define $sA$ (for $s\in M$ and $A\subseteq X $) as the inverse image of $A$ under the translation map by $s$:
	$$sA=\{x\in X\mid sx\in A\}.$$
	Consequently, also $\PP\PP X$ becomes an $M$-set, and we can easily see that $\UU X$ is an $M$-subset; indeed, for $\mathfrak x\in \UU X$,
	$$s\mathfrak x=\{A\subseteq X\mid sA\in \mathfrak x\}$$
	is again an ultrafilter on $X$. Furthermore, for every equivariant map $f:X\to  Y$, the map $\UU f$ is also equivariant, and so are the monad-structure maps $\dot{(\text{-})}$ and $\Sigma_X$. This gives the monad $\UU^M$ on $\Set^M$ which makes the diagram
	\begin{center}
		$\xymatrix{\Set^M\ar[r]^{\UU^M}\ar[d] & \Set^M\ar[d]\\
		\Set\ar[r]^{\UU} & \Set\\
		}$
	\end{center}
	commute. A $\UU^M$-space structure on an $M$-set $X$ is given by an $M$-subset $C\subseteq \UU X\times X$ satisfying (R) and (T); that is: by an ultrafilter convergence relation $\rightsquigarrow$ making $X$ a topological space such that all translation maps become continuous:
	$$\mathfrak x\rightsquigarrow y\quad\Longrightarrow\quad s\mathfrak x\rightsquigarrow sy,$$
	for all $\mathfrak x\in \UU X,\, y\in X,\, s\in M$. This leads us to
	$$ (\Set^M)^{\UU^M}\cong \UU^M\text{-}\mathbf{ASpa}(\Set^M)\cong \mathbf{KHaus}^M\hookrightarrow \mathbf{Top}^M\cong \UU^M\text{-}\mathbf{Spa}(\Set^M)\,.$$
	We need to describe the reflector of this full embedding, in order to be able to characterize condition (C) for $M$-topological spaces. Fortunately, the (huge) 2-category $\mathbf{CAT}$ lets us conclude that the adjunction $\mathrm{B}\dashv \mathrm{Inc}:\mathbf{KHaus}\to\mathbf{Top}$ gives
	\begin{center}
	$\xymatrix{[M,\mathbf{KHaus}]\ar@/_0.5pc/[rr]_{\quad[M,\mathrm{Inc}]} & \bot & [M,\mathbf{Top}]\ar@/_0.5pc/[ll]_{\quad [M,\mathrm B]}.\\
	}$ 	
	\end{center}
	This means that every translation $s:X\to X$ of an $M$-topological space $X$ must give the translation $\mathrm Bs:\mathrm BX\to \mathrm BX$, {\em i.e.,} the continuous map determined by making the diagram
	
	\begin{center}
	$\xymatrix{X\ar[r]^s\ar[d]_{\beta_X} & X\ar[d]^{\beta_X}\\
	\mathrm BX\ar[r]^{\mathrm Bs} & \mathrm BX\\
	}$	
	\end{center}
commute. In other words: we can make the compact Hausdorff space $\mathrm BX$ a topological $M$-space by putting $$s\mathfrak z = (\mathrm Bs)(\mathfrak z),$$
for all $s\in M,\,\mathfrak z\in\mathrm BX$,
and this makes the map $\beta_X$ equivariant. In this way we see that $\beta_X$ serves as a reflection of $X$ into $\mathbf{KHaus}^M$.
\end{exa}
Let us finally observe that, for a morphism $f:X\to Y$ in $\mathbf{Top}^M$, initiality with respect to the (topological) functor $\mathbf{Top}^M\to\Set^M$ is already characterized by initiality with respect to $\mathbf{Top}\to\Set$ (since, when {\em defining} $\mathfrak x\rightsquigarrow x$ by $\UU f(\mathfrak x)\rightsquigarrow f(x)$, the new topological structure on the $M$-set $X$ makes its translations	continuous).	 Applying this observation to $\beta_X$ we see that, for a topological $M$-space, condition (C) just means that (C) holds for its underlying topological space, that is:
$$\UU^M\text{-}\mathbf{CSpa}(\Set^M)\cong\mathbf{CReg}^M.$$

\begin{rems}
\rm 
	(1) The category $\mathbf{KHaus}^M$ appears explicitly already in \cite{Manes1969} (Example 7.2), as an example of a monadic category over $\Set$ that is built from the two interacting $\Set$-monads $M\times (\text{-})$ and $\UU$. 
	
	(2) The general results of the papers \cite{Leinster2013} and \cite{AdamakSousa2019}, and the examples mentioned in them, suggest that so-called {\em codensity monads} offer themselves most naturally for the investigation of the properties (X) considered in Section 4. The double-dualization monad of the category $K$-{\bf Vec} is such a monad, with the elements of the double-dual $X^{**}$ of a $K$-vector space $X$ assuming the role of ultrafilters. The Eilenberg-Moore algebras of this monad have been identified in \cite{Leinster2013} as the so-called linearly compact (Hausdorff) $K$-vector spaces.
	
	(3) It is well known that the procedure for the construction of the $\check{\mathrm C}$ech-Stone compactification as given in Example \ref{examples}(5) may be replicated for the construction of the Bohr compactification of a topological abelian group. Indeed, with the regular cogenerator $\mathbb T= \mathbb R/\mathbb Z$ of the category {\bf KHausAb} of compact Hausdorrf abelian groups and their continuous homomorphisms (also known as the dualizing object of the Pontryagin duality) at one's disposal, one may obtain the Bohr compactification of a topological abelian group $X$ as follows: replacing the unit interval by the circle group $\mathbb T$, just consider the closure of the image of the natural map $X\to \mathbb T^{\mathbf{TopAb}(X,\mathbb T)}$. Taking $X$ to be discrete, one obtains the (endofunctor of a) monad $\UU^{\mathbf{Ab}}$ on $\mathbf{Ab}$, for which the category $\UU^{\mathbf{Ab}}\text{-}\mathbf{Spa}(\mathbf{Ab})$ needs to be investigated.	
	Since, by a well-known, but non-trivial, result of harmonic analysis, a homomorphism $f:X\to Y$ of locally compact abelian (lca) groups is continuous, as soon as $\chi f$ is continuous for all (characters) $\chi\in\mathbf{TopAb}(Y,\mathbb T)$, we conjecture that all lca groups lie in $\UU_{\mathbf{Ab}}\text{-}\mathbf{CFSpa}(\mathbf{Ab})$, but must leave any elaboration of this conjecture for future work.
	\end{rems}


\section{External properties of complete regularity}
	
Continuing to work in a category $\CC$ equipped with a monad $T$, under the same general assumptions as in Section 4, we have the diagram 
\begin{center}
$\xymatrix{T\text{-{\bf ASpa}}(\CC)\ar[rd]_U\ar@/_0.5pc/[rr]_J^{\bot} &  & T\text{-{\bf CSpa}}(\CC)\ar[ld]^V\ar@/_0.5pc/[ll]_{\mathrm B}\\
& \CC & \\
}$
	\end{center}
	where $U,V$ are forgetful functors and $J$ is a full embedding with reflector $B$, the existence of which is guaranteed by Theorem \ref{AlgreflinSpa}. Since $VJ=U$, we may consider $J$ as a morphism $J:U\to V$ in the (huge) category {\bf CAT}/$\CC$ of categories over $\CC$ (see Remarks \ref{finalrems}(2)). It is noteworthy that $J$ embeds the {\em monadic} (and, hence, ``algebraic'') category $T\text{-{\bf ASpa}}(\CC)$ over $\CC$ reflectively into the {\em topological} category $T\text{-{\bf CSpa}}(\CC)$ over $\CC$ which, as such, is in particular {\em fibred} over $\CC$, {\em i.e.}, $V$ is a Grothendieck fibration (\cite{MonTop}). 
	
	In this section we show how $J$ distinguishes itself amongst all functors $K: T\text{-{\bf ASpa}}(\CC)\to\EE$ over $\CC$, with values in a category $\EE$ that live over $\CC$ via a (not necessarily faithful) functor $P:\EE\to\CC$ admitting {\em some} $P$-cartesian liftings. For clarity and fixation of our notation, we recall the relevant definitions.
	
	\begin{defi}\label{cartesiandefi}
	\rm
	Let $P:\EE\to \CC$ be a functor.	
	
	(1) A morphism $f:M\to N$ in $\EE$ is {\em $P$-cartesian} if, given any morphisms $g:L\to N$ in $\EE$ and $w:PL\to PM$ in $\CC$ with $Pf\cdot w=Pg$, there is a uniquely determined morphism $t:L\to M$ in $\EE$ with $Pt=w$ and $f\cdot t= g$. (Note that, when $P$ is faithful, the last equality and the uniqueness requirement for $t$ are consequences of the equality $Pt=w$.)
	
	(2) Given a morphism $u:X\to PN$ in $\CC$ with $N\in\EE$, a {\em $P$-cartesian lifting} of these data is a $P$-cartesian morphism $f:M\to N$ in $\EE$ with $PM=X$ and $Pf=u$. If for all such data there is a (chosen) $P$-cartesian lifting, then $P$ is a {\em (cloven) fibration}.
	\end{defi}
	
	\begin{rem}\label{topologicalfunctorrem} 
	\rm
	Every topological functor is a faithful fibration. In fact, topologicity of $P$ may be defined by replacing the single morphism $u:X\to PN$ above by a (possibly large) family $u_i:X\to PN_i\;(i\in I)$ and then requiring the existence of a jointly $P$-cartesian family $f_i:M\to N_i\;(i\in I)$ in $\EE$  with $Pf_i=u_i$ for all $i$, under the obvious extension of the notion of $P$-cartesianess. Faithfulness then becomes a consequence of this extended $P$-cartesian lifing property (see \cite{AHS}, \cite{MonTop} for details).		
	\end{rem}
	
	A routine exercise gives the following functorial version of the notion of $P
	$-cartesian lifting:
		
\begin{lm} For functors $E:\DD\to\EE,\;P:\EE\to \CC$ and $Q:\DD\to \CC$, let $\gamma:Q\to  PE$ be a natural transformation such that, for all objects $X\in \DD$, we are given a $P$-cartesian lifting $\delta_X:FX\to EX$ of $\gamma_X:QX\to PEX$. Then there is a unique way of extending the object assignment $X\mapsto FX$ to become a functor $F:\DD\to\EE$ with $PF=Q$, such that $\delta: F\to E$ is a natural transformation with $P\delta=\gamma$. 
\end{lm}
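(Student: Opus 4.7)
The plan is essentially bookkeeping with the universal property of each $\delta_Y$ used once, parametrically in the morphisms of $\DD$. Since a cartesian lifting is unique up to unique iso and, once chosen, determines morphisms by a universal property, both the definition of $F$ on arrows and the uniqueness of $F$ will be automatic; only functoriality needs a small verification, which itself reduces to the same uniqueness clause.

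First I would define, for each morphism $h:X\to Y$ in $\DD$, the arrow $Fh:FX\to FY$ as follows. Apply the $P$-cartesian property of $\delta_Y:FY\to EY$ (which lifts $\gamma_Y:QY\to PEY$) to the morphism $Eh\cdot\delta_X:FX\to EY$ in $\EE$ and the morphism $Qh:PFX\to PFY$ in $\CC$; here $PFX=QX$ and $PFY=QY$ hold because $\delta_X,\delta_Y$ are cartesian liftings of $\gamma_X,\gamma_Y$. The required compatibility $P(Eh\cdot\delta_X)=\gamma_Y\cdot Qh$ is exactly the naturality square for $\gamma$, since $P\delta_X=\gamma_X$ gives $P(Eh\cdot\delta_X)=PEh\cdot\gamma_X=\gamma_Y\cdot Qh$. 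The cartesian lifting property then produces a unique $Fh:FX\to FY$ with
$$P(Fh)=Qh\qquad\text{and}\qquad \delta_Y\cdot Fh=Eh\cdot\delta_X,$$
which is exactly what we need for $PF=Q$ and $P\delta=\gamma$ at the level of arrows, and for naturality of $\delta:F\to E$. Uniqueness of $F$ on arrows follows from the same universal property: any functor $F'$ with the listed properties must assign to $h$ a morphism satisfying both equations, hence must coincide with $Fh$.

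Next I would verify functoriality of $F$. For the identity on $X$, the morphism $1_{FX}$ satisfies $P(1_{FX})=1_{QX}=Q(1_X)$ and $\delta_X\cdot 1_{FX}=\delta_X=E(1_X)\cdot\delta_X$, so by the uniqueness clause in the cartesian lifting of $\gamma_X$, we must have $F(1_X)=1_{FX}$. For a composable pair $X\xrightarrow{h}Y\xrightarrow{h'}Z$ in $\DD$, the composite $Fh'\cdot Fh$ satisfies $P(Fh'\cdot Fh)=Qh'\cdot Qh=Q(h'\cdot h)$ and
$$\delta_Z\cdot Fh'\cdot Fh=Eh'\cdot\delta_Y\cdot Fh=Eh'\cdot Eh\cdot\delta_X=E(h'\cdot h)\cdot\delta_X,$$
so again uniqueness in the cartesian lifting of $\gamma_Z$ forces $F(h'\cdot h)=Fh'\cdot Fh$.

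There is no real obstacle in this proof; the only subtlety worth flagging is the logical role of the hypotheses: the naturality of $\gamma$ is used exactly once, namely to ensure that, at each morphism $h$ of $\DD$, the pair $(Eh\cdot\delta_X,Qh)$ is admissible for the cartesian property of $\delta_Y$. Everything else, including uniqueness of the functorial extension and preservation of identities and composites, is a direct consequence of the uniqueness part of the definition of $P$-cartesian morphism, so no further data of $\EE$, $\DD$, or $P$ is needed.
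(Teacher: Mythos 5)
Your proof is correct and is precisely the routine verification the paper has in mind (it states the Lemma without proof, calling it ``a routine exercise''): define $Fh$ by the cartesian property of $\delta_Y$ applied to the pair $(Eh\cdot\delta_X,\,Qh)$, with naturality of $\gamma$ supplying the admissibility condition, and let the uniqueness clause deliver functoriality and the uniqueness of $F$. Nothing is missing.
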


We refer to $\delta$ above as a {\em pointwise $P$-cartesian lifting} of $\gamma:Q\to PE$ and note that, when $P$ is a (cloven) fibration, every natural transformation $\gamma: Q\to PE$ admits a pointwise $P$-cartesian lifting.

Let us now look at the reflection $\beta: \mathrm{Id}_{T\text{-{\bf CSpa}}(\CC)}\to J\mathrm B$, {\em i.e.}, at the unit of the adjunction $\mathrm B\dashv J$.	By the definition of the category $T$-{\bf CSpa}$(\CC)$, every component of $\beta$ is $V$-cartesian, so that $\beta$ is trivially a pointwise $V$-cartesian lifting of $V\beta:V\to VJ\mathrm B$. We characterize $J$ as the universal functor over $\CC$ admitting a pointwise cartesian lifting of $V\beta$, as follows:

\begin{theorem}\label{maintheorem}
	Let $K:T\text{-{\bf ASpa}}(\CC)\to\EE$ and $P:\EE\to\CC$ be functors, with $PK=U=VJ$ being the forgetful functor, and assume that there is a pointwise $P$-cartesian lifting $\vartheta:\overline{K}\to K\mathrm B$ of $V\beta:V\to VJ\mathrm B=PK\mathrm B$. Then:
	\begin{itemize}
	\item[{\rm (1)}] The functor $\overline{K}:T\text{-{\bf CSpa}}(\CC)\to \EE$ satisfies the following properties {\rm (a),(b),(c)}; moreover, property {\rm (b)} determines $\overline{K}$ uniquely, while property {\rm(c)} still determines $\overline{K}$ up to a unique natural isomorphism whose $P$-image is an identity morphism:
	\begin{itemize}
	\item[{\rm (a)}]$P\overline{K}=V$;
	\item[{\rm (b)}] there is a natural isomorphism  $\iota: \overline{K}J\to K$ with $ P\iota=1_U$ and $\iota \mathrm B\cdot \overline{K}\beta=\vartheta$;
	\item[{\rm (c)}] $\overline{K}$ maps $\beta$ to a pointwise $P$-cartesian lifting of $V\beta$. 
	\end{itemize}
	\item[{\rm (2)}] Given any functor $H:  T\text{-{\bf CSpa}}(\CC)\to\EE$ with $PH=V$ and a natural transformation	$\kappa: HJ\to K$ with $P\kappa=1_U$, there is a unique natural transformation $\overline{\kappa}:H\to\overline{K}$ with $P\overline{\kappa}=1_V$ and $\iota\cdot \overline{\kappa}J=\kappa$.
	\begin{center}
$\xymatrix{&&&&\\
T\text{-{\bf ASpa}}(\CC)\ar@/_1.0pc/[rrd]_U\ar[rr]^J\ar@/^3.2pc/[rrrrrr]^{ K\;\;\;} & & T\text{-{\bf CSpa}}(\CC)\ar@{=>}[u]^{\iota}\ar@/^0.9pc/[rrrr]^{\overline{K}}\ar@/_0.9pc/[rrrr]_H\ar[d]_V &\ar@{=>}[u]^{\kappa} && \Uparrow\overline{\kappa}& \EE\ar@/^1.2pc/[lllld]^P \\
& &\CC & &&\\
}$
\end{center}
		\end{itemize}
	\end{theorem}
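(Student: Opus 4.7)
The plan is to extend the pointwise $P$-cartesian lifting $\vartheta$ objectwise to a functor $\overline{K}$ via the lemma preceding the theorem, and then exploit the triangle identities of the reflective adjunction $\mathrm B \dashv J$, whose counit $\epsilon : \mathrm B J \to \mathrm{Id}$ is a natural isomorphism since $J$ is fully faithful. The construction yields $P\overline{K} = V$ at once, which is property (a). For the natural isomorphism required in (b), I would set $\iota_A := K\epsilon_A \cdot \vartheta_{JA}$. Because $V\beta_{JA} = V(J\epsilon_A)^{-1}$ by the triangle identity $J\epsilon \cdot \beta J = 1_J$, the $P$-image of $\vartheta_{JA}$ is invertible; since a $P$-cartesian morphism with invertible $P$-image is itself invertible, $\vartheta_{JA}$, and hence $\iota_A$, is an isomorphism. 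A short computation then gives $P\iota_A = VJ\epsilon_A \cdot V(J\epsilon_A)^{-1} = 1_{UA}$, while naturality of $\iota$ is inherited from $\vartheta$ and $\epsilon$. The compatibility $\iota\mathrm B \cdot \overline{K}\beta = \vartheta$ unfolds, using naturality of $\vartheta$ at $\beta_X$, to $K\epsilon_{\mathrm BX} \cdot K\mathrm B\beta_X \cdot \vartheta_X = K(\epsilon_{\mathrm BX} \cdot \mathrm B\beta_X) \cdot \vartheta_X$, which collapses to $\vartheta_X$ by the other triangle identity $\epsilon\mathrm B \cdot \mathrm B\beta = 1_{\mathrm B}$. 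Property (c) is then immediate from $\overline{K}\beta_X = \iota_{\mathrm BX}^{-1} \cdot \vartheta_X$, since the composite of a $P$-cartesian morphism with an isomorphism remains $P$-cartesian.

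For the uniqueness claims in (1), given any competitor $(\overline{K}', \iota')$ satisfying (b), the equation $\iota'_{\mathrm BX} \cdot \overline{K}'\beta_X = \vartheta_X$ identifies the domain of $\overline{K}'\beta_X$ with that of $\vartheta_X$, forcing $\overline{K}'X = \overline{K}X$ on objects. On morphisms, one checks first that $\vartheta$ is natural with respect to $\overline{K}'$ as well (using naturality of $\iota'$ and $\overline{K}'\beta$), whence $\vartheta_Y \cdot \overline{K}'f = K\mathrm Bf \cdot \vartheta_X = \vartheta_Y \cdot \overline{K}f$, and $P$-cartesianness of $\vartheta_Y$ yields $\overline{K}'f = \overline{K}f$. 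The weaker determination under (c) alone reduces to the standard fact that any two pointwise $P$-cartesian liftings of the natural transformation $V\beta$ are connected by a unique natural isomorphism lying over the identity of $V$.

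For part (2), I would define $\overline{\kappa}_X: HX \to \overline{K}X$ as the unique morphism determined by the $P$-cartesian property of $\vartheta_X$ applied to the datum $\kappa_{\mathrm BX} \cdot H\beta_X: HX \to K\mathrm BX$, whose $P$-image is $V\beta_X = P\vartheta_X \cdot 1_{VX}$; thus $\overline{\kappa}_X$ is characterized by $\vartheta_X \cdot \overline{\kappa}_X = \kappa_{\mathrm BX} \cdot H\beta_X$ and $P\overline{\kappa}_X = 1_{VX}$. Naturality of $\overline{\kappa}$ follows because both $\vartheta_Y \cdot \overline{K}f \cdot \overline{\kappa}_X$ and $\vartheta_Y \cdot \overline{\kappa}_Y \cdot Hf$ collapse to $\kappa_{\mathrm BY} \cdot H\beta_Y \cdot Hf$ after successively applying naturality of $\vartheta$, $\kappa$ and $\beta$, whereupon uniqueness in the $P$-cartesian property of $\vartheta_Y$ finishes the argument. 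The identity $\iota \cdot \overline{\kappa}J = \kappa$ at $A$ becomes $K\epsilon_A \cdot \kappa_{\mathrm BJA} \cdot H\beta_{JA} = \kappa_A \cdot H(J\epsilon_A \cdot \beta_{JA}) = \kappa_A$ by naturality of $\kappa$ and the triangle identity $J\epsilon \cdot \beta J = 1_J$. For uniqueness, any competitor $\overline{\kappa}'$ must satisfy $\iota_{\mathrm BX} \cdot \overline{\kappa}'_{JBX} = \kappa_{\mathrm BX}$, and naturality of $\overline{\kappa}'$ at $\beta_X$ then forces $\vartheta_X \cdot \overline{\kappa}'_X = \kappa_{\mathrm BX} \cdot H\beta_X$, giving $\overline{\kappa}'_X = \overline{\kappa}_X$ by uniqueness of $P$-cartesian lifts.

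The main obstacle is organizational rather than conceptual: four natural transformations $\beta$, $\vartheta$, $\iota$, $\kappa$ must interact cleanly via the two triangle identities of the reflective adjunction, and the subtlety lies in recognizing that every desired composite collapses the moment one applies either $J\epsilon \cdot \beta J = 1_J$ or $\epsilon\mathrm B \cdot \mathrm B\beta = 1_{\mathrm B}$; once this collapse is made explicit, each structural assertion reduces to a single invocation of the $P$-cartesian universal property.
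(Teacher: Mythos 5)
Your proposal is correct and follows essentially the same route as the paper's proof: the same definition $\iota = K\varepsilon\cdot\vartheta J$, the same use of the two triangle identities of $\mathrm B\dashv J$ together with naturality of $\vartheta$ at $\beta$, and the same construction of $\overline{\kappa}$ via the $P$-cartesian universal property of $\vartheta_X$ applied to $\kappa_{\mathrm B X}\cdot H\beta_X$. The only (welcome) differences are elaborations — you make explicit that a $P$-cartesian morphism over an isomorphism is invertible, and you spell out the morphism-level step in the uniqueness argument for (b), which the paper leaves terse.
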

	\begin{proof}
(1) The existence of the functor $\overline{K}$ is secured by the Lemma, and it satisfies (a) by its definition.	Since the functor $J$ is fully faithful, the counit $\varepsilon:\mathrm BJ\to\mathrm{Id}_{T\text{-{\bf Alg}}(\CC)}$ of the adjunction $B\dashv J$ is an isomorphism. Furthermore, also the transformation $\vartheta J$ is an isomorphism, as a P-cartesian lifting of the natural isomorphism $V\beta J$. Hence we obtain the composite  natural isomorphism
$$\xymatrix{\iota=(\overline{K}J\ar[rr]^{\vartheta J} && K\mathrm BJ\ar[rr]^{K\varepsilon} && K),
}$$
which satisfies $$P\iota= PK\varepsilon\cdot P\vartheta J=VJ\varepsilon\cdot V\beta J= 1_U,$$
by one of the triangular identities of the adjunction $\mathrm B\dashv J$. Furthermore,
$$ \iota \mathrm B\cdot \overline{K}\beta=K\varepsilon \mathrm B\cdot\vartheta J\mathrm B\cdot H\beta= K\varepsilon \mathrm B\cdot K\mathrm B\beta\vartheta=\vartheta,$$
by the other triangular identity of $\mathrm B\dashv J$.
This confirms that $\overline{K}$ satisfies (b), and (c) is actually an immediate consequence of (b) since, with $\iota \mathrm B$ being an isomorphism,  $\overline{K}\beta$ inherits its $P$-cartesianess from $\vartheta$, and $P\overline{K}\beta= P\iota \mathrm B\cdot P\overline{K}\beta=P\vartheta=V\beta$ holds. 
To show the uniqueness statement for $\overline{K}$, first let $L:T\text{-{\bf CSpa}}(\CC)\to\EE$ be a functor admitting a natural (iso)morphism $\lambda: LJ\to K$ and satisfying ($PL=V,\; P\lambda= 1_U$ and) $\lambda \mathrm B\cdot L\beta=\vartheta$. But the latter equality means in particular that
\begin{center}
$\xymatrix{L\ar[r]^{L\beta} & LJ\mathrm B\ar[r]^{\lambda \mathrm B} & K\mathrm B\\
}$
 \end{center}
has the same domain as $\vartheta$, so that $L=\overline{K}$. Assuming, instead, that $L\beta$ is  some $P$-cartesian lifting of $V\beta:V\to PK\mathrm B$ still means that the domain $L$ of $L\beta$ can differ from the domain $\overline{K}$ of the given $P$-cartesian lifting $\vartheta$ only by a unique isomorphism $L\to\overline{K}$ whose $P$-image is an identity morphism. (This argumentation is carried out more generally and in greater detail in (2).)

(2) With $H$ and $\kappa$ as given, for every $T$-space $X$ we obtain, by the $P$-cartesianess of $\vartheta_X$, the morphism $\overline{\kappa}_X:HX\to\overline{K}X$ in $\EE$, determined
by $P\overline{\kappa}_X=1_{VX}$ and $\vartheta_X\cdot \overline{\kappa}_X=\kappa_{\mathrm BX}\cdot H\beta_X$.

\begin{center}
$\xymatrix{& \overline{K}X\ar@{..}[ddd]\ar[rrr]^{\vartheta_X} &&& K\mathrm BX\ar@{..}[ddd] \\
HX\ar@{..}[ddr]\ar[ru]^{\overline{\kappa}_X}\ar[rrr]^{H\beta_X} &&& HJ\mathrm BX\ar@{..}[rdd]\ar[ru]_{\kappa_{\mathrm BX}} & \\
&&&&\\
& VX\ar[rrr]^{V\beta_X=PH\beta_X} &&& VJ\mathrm BX= PK\mathrm BX
}$	
\end{center}
 The naturality of $\vartheta, H\beta$ and $\kappa \mathrm B$ makes also $\overline{\kappa}: H\to\overline {K}$ natural, and we have $P\overline{\kappa}=!_V$ by definition, and $$\iota\cdot\overline{\kappa}J= K\varepsilon\cdot\vartheta J\cdot\overline{\kappa}J =K\varepsilon\cdot \kappa \mathrm BJ\cdot H\beta J= \kappa\cdot HJ\varepsilon\cdot H\beta J=\kappa$$
by adjunction. To see the uniqueness of $\overline{\kappa}$, let the natural transformation $\mu: H\to\overline{K}$ satisfy $P\mu=1_V$ and $\iota\cdot\mu J=\kappa$. One then has 
$$\kappa \mathrm B\cdot H\beta =\iota \mathrm B\cdot\mu J\mathrm B\cdot H\beta= \iota \mathrm B\cdot \overline{K}\beta\cdot\mu= \vartheta\cdot\mu,   $$
so that $\mu$ satisfies the defining conditions of $\overline{\kappa}$ and must therefore equal $\overline{\kappa}$.
	\end{proof}

 \begin{cor}[Burroni \cite{Burroni1971}]
 		Let $P:\EE\to C$ be a fibration. Then every functor $K:T\text{-{\bf ASpa}}(\CC)\to\EE$ with $PK=U$ admits a pseudo-extension $\overline{K}: T\text{-{\bf CSpa}}(\CC) \to \EE$ over $\CC$ (so that $\overline{K}	J\cong K$ and $P\overline{K}=V$) which, up to isomorphism, is uniquely determined by the property of mapping $\beta$ to a pointwise $P$-cartesian natural transformation.
 		  \end{cor}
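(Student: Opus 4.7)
The plan is to deduce the corollary directly from Theorem \ref{maintheorem} by exploiting the fibrational hypothesis on $P$ to manufacture the pointwise $P$-cartesian lifting $\vartheta$ needed to invoke the theorem. Concretely, for every object $X$ of $T\text{-{\bf CSpa}}(\CC)$ the fibration property of $P$ supplies a $P$-cartesian lifting
$\vartheta_X:\overline{K}X\to K\mathrm BX$ of the $\CC$-morphism $V\beta_X:VX\to VJ\mathrm BX=PK\mathrm BX$; by the Lemma preceding Theorem \ref{maintheorem}, the family $(\vartheta_X)_X$ extends uniquely to a functor $\overline K:T\text{-{\bf CSpa}}(\CC)\to\EE$ with $P\overline K=V$ such that $\vartheta:\overline K\to K\mathrm B$ is a natural transformation with $P\vartheta=V\beta$.

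Having produced this datum, I would then simply quote Theorem \ref{maintheorem}(1). Part (a) gives $P\overline K=V$, part (b) delivers a natural isomorphism $\iota:\overline KJ\to K$ with $P\iota=1_U$, which establishes both $\overline KJ\cong K$ and the ``over $\CC$'' condition, and part (c) states precisely that $\overline K$ sends $\beta$ to a pointwise $P$-cartesian natural transformation. So $\overline K$ is a pseudo-extension of $K$ with the stated cartesian-lifting property.

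For the essential uniqueness claim, suppose $H:T\text{-{\bf CSpa}}(\CC)\to\EE$ is another functor with $PH=V$ that admits a natural isomorphism $\kappa:HJ\to K$ with $P\kappa=1_U$ and also sends $\beta$ to a pointwise $P$-cartesian natural transformation. I would apply Theorem \ref{maintheorem}(2) to this $H$ and $\kappa$ to obtain a unique natural transformation $\overline\kappa:H\to\overline K$ with $P\overline\kappa=1_V$ and $\iota\cdot\overline\kappa J=\kappa$. The point is then to verify that $\overline\kappa$ is a natural isomorphism: for each object $X$, the equation $\vartheta_X\cdot\overline\kappa_X=\kappa_{\mathrm BX}\cdot H\beta_X$ (from the construction of $\overline\kappa$ in the proof of Theorem \ref{maintheorem}(2)) expresses $\overline\kappa_X$ as the factorisation, through the $P$-cartesian morphism $\vartheta_X$, of another $P$-cartesian morphism $H\beta_X$ over the identity $1_{VX}$ (since $\kappa_{\mathrm BX}$ is an isomorphism with $P$-image an identity); hence $\overline\kappa_X$ is itself a $P$-vertical isomorphism.

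The only step that requires any real thought is this last verification, i.e.\ extracting invertibility of $\overline\kappa$ from the $P$-cartesianness of both $H\beta$ and $\vartheta$; everything else is assembling ingredients already present in the excerpt. In essence, the corollary is the ``punchline'' that Theorem \ref{maintheorem} was designed to deliver, with the fibration hypothesis serving only to guarantee the existence of the lifting $\vartheta$ which the theorem took as a hypothesis.
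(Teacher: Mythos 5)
Your argument is correct and is essentially the route the paper intends: the Corollary is stated as an immediate consequence of Theorem \ref{maintheorem}, with the fibration hypothesis serving exactly to supply the pointwise $P$-cartesian lifting $\vartheta$ of $V\beta$ that the theorem takes as input, and the essential uniqueness coming from parts (1)(c) and (2). Your closing verification that $\overline{\kappa}$ is invertible --- comparing the two $P$-cartesian liftings $\vartheta_X$ and $\kappa_{\mathrm BX}\cdot H\beta_X$ of $V\beta_X$ with common codomain $K\mathrm BX$ --- is a slightly more explicit rendering of the paper's parenthetical uniqueness remark in the proof of Theorem \ref{maintheorem}(1), and it is sound.
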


\begin{rems}\label{finalrems}
\rm 
(1) The paper \cite{Burroni1971} actually claims	 that $\overline{K}$ transforms {\em all} $V$-cartesian monotone morphisms into $P$-cartesian morphisms---not just the $V$-cartesian morphisms $\beta_X\;(X\in T\text{-{\bf CSpa}}(\CC)$). We have not been able to confirm this claim and conjecture that it does not hold in general.

(2) Property (2) of the Theorem establishes that  $\overline{K}$ and $\iota$ form a {\em right Kan extension of $K$ along $J$} (see \cite{MacLane1971}), not in {\bf CAT}, but in the (huge) 2-category $\mathbf{CAT}/\CC$, which is defined by:
	\begin{itemize}
	\item the objects are functors $G:\AA\to\CC$;
	\item the morphisms $F: G\to H$ (with $H:\BB\to\CC$) are functors with $HF=G$;
	\item the 2-cells $\varphi:F\Longrightarrow F'$ are natural transformations with $H\varphi=1_G$.	
	\end{itemize}
	In fact, our proof shows that a significant part of Theorem \ref{maintheorem} may be established in any 2-category in lieu of {\bf CAT}.
	
	(3) It is important to note that the universal property described by the Theorem does not classify $T\text{-{\bf CSpa}}(\CC)$ as a topological extension of the monadic category  $T\text{-{\bf ASpa}}(\CC)$ over $\CC$. In fact, the paper \cite{Tholen1979} describes (in a more general context) a different extension category which is also topological over $\CC$ and contains $\CC^T\simeq T\text{-{\bf ASpa}}(\CC)$ as a full reflective subcategory. One simply considers the category 
	$$\mathsf{Gen}(\CC^T)$$
	of Eilenberg-Moore $T$-algebras $(R,r)$ that come with a specified system $X$ of generators. Hence, objects in $\mathsf{Gen}(\CC^T)$ are given by pairs $(p:X\to R, r)$ where $(R,r)$ is a $T$-algebra and $p:X\to R$ is a $\CC$-morphism, such that the $T$-homomorphism $p^{\sharp}=r\cdot Tp:(TX,\mu_X)\to (R,r)$ is a regular epimorphism in $\CC$ (and, hence, in $\CC^T$); a morphism $(f,f^*):(p,r)\to (q,s)$ is given by a $\CC$-morphism $f$ and a $T$-homomorphism $f^*:(R,r)\to(S,s)$ making the diagram
	\begin{center}$\xymatrix{X\ar[d]_p\ar[rr]^f && Y\ar[d]^q\\
	R\ar[rr]^{f^*} && S\\
	}$
	\end{center}
	commute. (We note that $f^*$ is determined by $f$, but we find it convenient to define $\mathsf{Gen}(\CC^T)$ rather explicitly as a full subcategory of the comma-category $\CC\downarrow U$, where we now consider the forgetful $U$ as a functor $\CC^T\to\CC$.) By an application of a more general result of \cite{Tholen1979} (whereby so-called semi-topological functors are precisely the restrictions of topological functors to full reflective subcategories), it follows that:
	\begin{itemize}
	\item	the ``domain functor'' $W:\mathsf{Gen}(\CC^T)\lra\CC,\; (p:X\to R,\,r)\longmapsto X,$ is topological and, in particular, a fibration, with the $W$-cartesian morphisms $(f,f^*)$ being characterized as those with $f^*$ monic in $\CC$ (and, hence, in $\CC^T$); 
	\item the ``codomain functor'' $C: \mathsf{Gen}(\CC^T)\lra\CC^T,\;(p:X\to R,\,r)\longmapsto(R,r)$ is the reflector of the full embedding $I:\CC^T\lra\mathsf{Gen}(\CC^T),\;(R,r)\longmapsto (1_R, r)$, with the adjunction units $\gamma_{(p,r)}:(p,r)\lra IC(p,r)$ given by the squares
	\begin{center}
	$\xymatrix{X\ar[d]_p\ar[rr]^p && R\ar[d]^{1_R}\\
	R\ar[rr]^{1_R} && R\\
	}$	
	\end{center}
	\end{itemize}
This establishes the diagram
\begin{center}
$\xymatrix{\CC^T\ar[rd]_U\ar@/_0.5pc/[rr]_I^{\bot} &  & \mathsf{Gen}(\CC^T)\ar[ld]^W\ar@/_0.5pc/[ll]_{C}\\
& \CC & \\
}$
	\end{center}
for which one may prove exactly the same statements that we have established in Theorem \ref{maintheorem} for the initial diagram of Section 5 --- one just has to write $I,C,\gamma $ for $J, V, \beta$, respectively, as indicated by the following diagram.
	\begin{center}
$\xymatrix{&&&&\\
T\text{-}\mathbf{ASpa}(\CC)\simeq\CC^T\ar@/_1.0pc/[rrd]_U\ar[rr]^I\ar@/^3.2pc/[rrrrrr]^{ K\;\;\;} & & \mathsf{Gen}(\CC^T)\ar@{=>}[u]^{\iota}\ar@/^0.9pc/[rrrr]^{\overline{K}}\ar@/_0.9pc/[rrrr]_H\ar[d]_W &\ar@{=>}[u]^{\kappa} && \Uparrow\overline{\kappa}& \EE\ar@/^1.2pc/[lllld]^P \\
& &\CC & &&\\
}$
\end{center}
Then, an application with $K:= J: T\text{-}\mathbf{ASpa}(\CC)\lra T\text{-}\mathbf{CSpa}(\CC)$ produces the functor $\overline{J}: \mathsf{Gen}(\CC^T)\lra 	T\text{-}\mathbf{CSpa}(\CC)$ with $\overline{J} I\cong J$, whereas, when putting $K:=I$ in Theorem	\ref{maintheorem}, we obtain a functor $\overline{I}:	T\text{-}\mathbf{CSpa}(\CC)\lra \mathsf{Gen}(\CC^T)$ with $\overline{I}J\cong I$.
Rather than pursuing these functors further in generality, in the following Example we present them explicitly in our role model $T=\UU$ (and $\CC=\Set$) and show that $\overline{I}$ is a full coreflective embedding with right adjoint $\overline{J}$, but fails to be an equivalence of categories.
\end{rems}

\begin{exa}
\rm 
	The objects of the category $\mathsf{Gen}(\mathbf{KHaus})$ are given by $\Set$-maps $p:X\to R$, with $R$ carrying a compact Hausdorff topology, such that every point $r\in R$ is the limit point of $p[\mathfrak x]$, for some ultrafilter $\mathfrak x$ on $X$ (where, for simplicity, we have written $p[\mathfrak x]$ for $\UU p(\mathfrak x)$). A morphism $(p:X\to R)\lra(q:Y\to S)$ is a pair $(f:X\to Y,f^*:R\to S)$ of maps with $f^*p=qf$, such that $f^*$ is continuous (and, hence, determined by $f$, via the formula $f^*(r)=\lim q[f[\mathfrak x]]$ whenever $r=\lim p[\mathfrak x]$). 
	
	The category $\mathbf{KHaus}$ gets fully embedded into $\mathsf{Gen}(\mathbf{KHaus})$ via $I: R\mapsto 1_R$. Up to isomorphism, it extends to the full embedding $\overline{I}: \mathbf{CReg}\lra\mathsf{Gen}(\mathbf{KHaus})$, which maps the completely regular space $X$ to the map 
	$\tilde{\beta}_X:X\to\tilde{\mathrm B}X$, where $\tilde{\mathrm B}X$ is the (closed) image of the map $\beta_X^{\sharp}:\UU X\to\mathrm BX, \mathfrak x\mapsto \lim \beta_X[\mathfrak x]$, and where $\tilde{\beta}_X$ is the restriction of $\beta_X:X\to \mathrm BX$. On the other hand, the inclusion functor $J:\mathbf{KHaus}\hookrightarrow\mathbf{CReg}$ gets extended along $I$ to the functor $\overline{J}:\mathsf{Gen}(\mathbf{KHaus})\lra \mathbf{CReg}$, which simply sends an object $p:X\to R$ to the set $X$, provided with the initial topology with respect to $p$ and the compact Hausdorff space $R$.
	Briefly, in the following diagram, the arrows commute in both directions, up to isomorphism:
	\begin{center}
	$\xymatrix{ & \mathbf{KHaus}\ar@{>->}[ld]_J\ar@{>->}[rd]^I &\\
	\mathbf{CReg}\;\ar@{>->}[rr]^{\overline{I}}_{\bot} && \mathsf{Gen}(\mathbf{KHaus})\ar@/^1.0pc/[ll]^{\overline{J}}
	}$	
	\end{center}
Furthermore, $\overline{J}$ is right adjoint to $\overline{I}$ which, therefore, is a coreflective embedding; indeed, one easily sees that, for a completely regular space $Z$, continuous maps $f: Z\to X$, where $X$ carries the initial topology with respect to $p:X\to R,\; R$ compact Hausdorff,  correspond bijectively (and naturally) to morphisms $(f,f^*):\overline{I}Z\lra p$ in $\mathsf{Gen}(\mathbf{KHaus})$. Note also that $\overline{J}I\cong J$  implies $C\overline{I}\cong\mathrm B$, where $C$ is the left adjoint of $I$, sending $(p:X\to R)$ to $R$. 

Finally, $\overline{I}$ (as well as $\overline{J}$) fails to be an equivalence of categories, since $\overline{I}\overline{J}$ fails to be isomorphic to $\mathrm{Id}_{\mathsf{Gen}(\mathbf{KHaus})}$; for example, it maps the embedding of the discrete $\mathbb N$ into its one-point compactification to the $\check{\mathrm C}$ech-Stone compactification $\beta_{\mathbb N}:\mathbb N\to\mathrm B\mathbb N$. 
\end{exa}
	
\section{Some remarks on related work}

For $\CC=\Set$, the structure of a $T$-space $X$ as presented in Section 3 is given by a relation 
$C\subseteq TX\times X$; equivalently, by a map $TX\times X\lra \mathsf 2=\{\text{false, true}\}$. Monoidal topology \cite{MonTop}, as initiated in the papers \cite{CH2003}, \cite{CT2003}, \cite{CHT2004}, \cite{Seal2005}, extends Barr's relational $T$-algebras and, hence, Burroni's $T$-orders at the level of sets, in two ways. On one hand, one replaces the Boolean algebra $\mathsf 2$ by a quantale $\mathsf V$, {\em i.e.}, by a complete lattice $\mathsf V$ that comes with a monoid structure whose multiplication preserves arbitrary joins in each variable. This permits in particular the inclusion of metrically enriched structures into the setting (as initiated by Lawvere \cite{Lawvere1973}). Following Lowen's \cite{Lowen1997} and other authors' works, these have enjoyed much attention in recent years. 

On the other hand, for the transitivity axiom (T) of 
Definition \ref{definitionUAlg}, the monoidal topology setting allows for considerable flexibility of how the relation $C$ gets extended to a relation $\hat{C}\subseteq TTX\times TX$; instead of prescribing the so-called Barr-extension of the monad $T$, one considers a fairly general lax-functorial extension of $T$ from mappings to $\mathsf V$-valued relations of sets.
Among many other applications, this additional flexibility led to the unexpected result \cite{MonTop} that the same two-axiom presentation of topological spaces in terms of ultrafilter convergence remains valid if one replaces ultrafilters by filters, under an appropriate lax extension of the filter monad.

Briefly then, the setting of monoidal topology extends the Barr-Burroni setting considerably, in the case $\CC=\Set$. However, it does not allow for an easy extension to other base categories. Put in more categorical terms, monoidal topology leans on enriched categorical methods (with the quantale $\mathsf V$ being treated as a ``thin'' monoidal-closed category), while the general Burroni setting extends the theory of small categories and preorders internal to a category $\CC$, using an arbitrary monad on $\CC$.

A large portion of Ralph Kopperman's work, pursued over several decades, concerns generalized metric structures and their applications in topology and computer science; see, for example, \cite{Kopperman1988}, \cite{FlaggKopper1997}, \cite{BuKoMa2009}. It is therefore closely related to the  monoidal-topology setting, but there is a significant conceptual difference, as he normally considers the ``value recipient'' $\mathsf V$ to vary with, and be part of, the structure of the space-like objects that are under investigation. 


\begin{thebibliography}{}

\parskip0pt


\bibitem{AHS}
J. Ad\'amek, H. Herrlich and  G.E. Strecker:
{\em Abstract and Concrete Categories: The Joy of Cats}, John Wiley \& Sons, New
York, 1990. Republished in {\em Reprints in Theory and Applications of
Categories} 17, 2006.

\bibitem{AdamakSousa2019}
J. Ad\'{a}mek and L. Sousa: D-ultrafilters and their monads, {\em Advances in Mathematics} 377:107486, 2020.

\bibitem{Barr1970}
M. Barr: {\em Relational algebras}, in: Lecture Notes in Mathematics 137, pp. 39--55, Springer-Verlag, Berlin-Heidelberg-New York, 1970.

\bibitem{Borger1987} 
R. B\"{o}rger: Coproducts and ultrafilters, {\em Journal of Pure and Applied Algebra} 46:35--47, 1987.

\bibitem{BuKoMa2009}
M Bukatin, R. Kopperman and S.G. Matthews: Partial metric spaces, {\em American Mathematical Monthly} 116(8):708--718.

\bibitem{Burroni1971}
A. Burroni: $T$-cat\'egories (cat\'{e}gories dans un triple), {\em Cahiers de Topologie et G\'{e}om\'{e}trie Diff\'{e}rentielle} 12:215--321, 1971.

\bibitem{Choquet1948}
G. Choquet: Convergences, {\em Annales de l'Universit\'{e} de Grenoble. Nouvelle S\'{e}rie. Section Sciences Math\'{e}matiques et Physiques} 23:57--112, 1948.


\bibitem{CH2003}
M.M. Clementino and D. Hofmann: Topological features of lax algebras, {\em Applied Categorical Structures} 11:267--286, 2003.

\bibitem{CT2003}
M.M. Clementino and W. Tholen: Metric, topology and multi category, {\em Journal of Pure and Applied Algebra} 179:13--47, 2003.

\bibitem{CHT2004}
M.M. Clementino, D. Hofmann, and W. Tholen: One setting for all: metric, topology, uniformity, approach structure, {\em Applied Categorical Structures} 12:127--154, 2004.

\bibitem{FlaggKopper1997}
R.C. Flagg and R. Kopperman: Continuity spaces: reconciling domains and metric spaces, {\em Theoretical Computer Science} 177:111--138, 1997.

\bibitem{Hausdorff1914}
F. Hausdorff: {\em Grundz\"{u}ge der Mengenlehre}, Veit, Leipzig, 1914.

\bibitem{Herrlich1968}
H. Herrlich: {\em Topologische Reflexionen und Coreflexionen}, Lecture Notes in Mathematics 78,  Springer-Verlag, Berlin-Heidelberg-New York, 1968.

\bibitem{MonTop} D. Hofmann, G.J. Seal and W. Tholen (editors): {\em Monoidal Topology. A Categorical Approach to Order, Metric, and Topology}, Cambridge University Press, Cambridge, 2014.




\bibitem{Kamnitzer1974}
S.H. Kamnitzer: {\em Protoreflections, Relational Algebras and Topology}, PhD thesis, University of Cape Town, Cape Town, 1974.

\bibitem{Kopperman1988}
R. Kopperman: All topologies come from generalized metrics, {\em American Mathematical Monthly} 95:89--97, 1988.

\bibitem{LaiTholen2017}
H. Lai and W. Tholen: Quantale-valued topological spaces via closure and convergence, {\em Topology and its Applications} 230:599--620, 2017.

\bibitem{Lawvere1973}
F.W. Lawvere: Metric spaces, generalized logic, and closed categories, {\em Rendiconti del Seminario Matematico e Fisico di Milano} 43:135--166, 1973. Republished in {\em Reprints in Theory and Applications of Categories} 1, 2002.

\bibitem{Leinster2013}
T. Leinster: Codensity and the ultrafilter monad, {\em Theory and Applications of Categories} 28(13):332--370, 2013.
\bibitem{Lowen1997}
R. Lowen: {\em Approach Spaces: The Missing Link in the Topology-Uniformity-Metric Triad}, Oxford University Press, Oxford, 1997.

\bibitem{MacLane1971}
S. Mac Lane: {\em Categories for the Working Mathematician}, Springer-Verlag, Berlin-Heidelbereg-New York, 1971; 2nd ed. 1994.

\bibitem{Manes1967}
E.G. Manes: {\em A Triple Miscellany}, PhD thesis, Wesleyan University, Middletown CT, 1967.

\bibitem{Manes1969}
E.G. Manes: {\em A triple theoretic construction of compact algebras}, in: Lecture Notes in Mathematics 80, pp. 91--118,  Springer-Verlag, Berlin-Heidelberg-New York, 1969.

\bibitem{Manes1974}
E.G. Manes: Compact Hausdorff objects, {\em General Topology and its Applications} 4(4):341--360, 1974.

\bibitem{Manes20210}
E.G. Manes: Monads in Topology, {\em Topology and its Applications} 157:961--989, 2010.

\bibitem{Mobus1981} 
A. M\"{o}bus: {\em Relational-Algebren}, PhD thesis, Universit\"{a}t D\"{u}sseldorf, D\"{u}sseldorf, 1981.

\bibitem{Perry1976}
R.J. Perry: Completely regular relational algebras. {\em Cahiers de Topologie et G\'{e}om\'{e}trie Diff\'{e}rentielle Cat\'{e}goriques} 17(2):125--133, 1976.

\bibitem{Seal2005}
G.J. Seal: Canonical and op-canonical lax algebras, {\em Theory and Applications of Categories} 14:221--243, 2005.

\bibitem{Tholen1979}
W. Tholen: Semi-topological functors I, {\em Journal of Pure and Applied Algebra} 15:53--73, 1979.


\bibitem{TholenYeganeh2021}
W. Tholen and L. Yeganeh: The comprehensive factorization of Burroni's $T$-functors, {\em Theory and Applications of Categories} 36(8):206--249, 2021.

\bibitem{Willard1968}
S. Willard: {\em General Topology}, Addison-Wesley, Reading MA,  1968.

\bibitem{Wyler1976}
O. Wyler: {\em Are there topoi in topology?}, in: Lecture Notes in Mathematics 540, pp. 699--719, Springer-Verlag, Berlin-Heidelberg-New York, 1976.

\end{thebibliography}
\end{document}